\renewcommand{\d}{\mathrm{d}}		
\newcommand{\ev}{\mathrm{ev}}
\newcommand{\vol}{\mathrm{vol}}
\newcommand{\relint}{\mathrm{relint\,}}
\newcommand{\Vol}{\mathrm{vol}}
\newtheorem{thm}{Theorem}
\newtheorem{cor}{Corollary}
\newtheorem{prop}{Proposition}
\theoremstyle{definition}
\newtheorem*{rem}{Remark}
\newtheorem{defin}{Definition}
\begin{document}
\title{On the Volume of Boolean expressions of Large Congruent Balls}
\author{Bal\'azs Csik\'os}
\address{E\"otv\"os Lor\'and University, Institute of Mathematics}
\dedicatory{Dedicated to K\'aroly Bezdek and Egon Schulte on the occasion of their 60th birthdays.}
\curraddr{Budapest, P\'azm\'any P\'eter stny. 1/C, H-1117 Hungary}
\email{csikos@cs.elte.hu}
\subjclass[2010]{52A38 (primary) and 52A39 (secondary)} 
\keywords{Volume, intrinsic volume, quermassintegral, unions and intersections of balls}
\date{}
\begin{abstract}  We consider  the volume of a Boolean expression of some congruent
balls about a given system of centers in the $d$-dimensional Euclidean space. When the radius $r$ of the balls is large, this volume can be approximated by a polynomial of $r$, which will be computed up to an $O(r^{d-3})$ error term. We study how the top coefficients of this polynomial depend on the set of the centers. It is known that in 
the case of the union of the balls, the top coefficients are some constant multiples 
of the intrinsic volumes of the convex hull of the centers. 
Thus, the coefficients in the general case lead to generalizations of the 
intrinsic volumes, in particular, to a generalization of the mean width of a 
set. Some known results on the mean width, along with the theorem on its 
monotonicity under contractions are extended to the ``Boolean analogues'' of the 
mean width.
\end{abstract}

\maketitle

\section{Introduction}\label{sec:introduction}
The long-standing conjecture of Kneser \cite{Kneser} and Poulsen 
\cite{Poulsen} claims that if the points $\mathbf p_1,\dots,\mathbf p_N$ and 
$\mathbf q_1,\dots,\mathbf q_N$ of the $d$-dimensional Euclidean space $\mathbb 
R^d$ satisfy the inequalities $\d(\mathbf p_i,\mathbf p_j)\geq \d(\mathbf 
q_i,\mathbf q_j)$ for all $0\leq i,j\leq N$, then 
\[
\vol_d\left(\bigcup_{i=1}^N B^d(\mathbf p_i,r)\right)\geq \vol_d\left(\bigcup_{i=1}^N B^d(\mathbf q_i,r)\right)
\]
for any $r>0$, where $B^d(\mathbf p,r)$ denotes the closed $d$-dimensional ball 
of radius $r$ about the point $\mathbf p$ and $\vol_d$ is the $d$-dimensional 
volume. K.~Bezdek and Connelly \cite{Bezdek_Connelly_2002} proved the 
conjecture in the plane, but it is still open in dimensions $d\geq 3$. 

Results of Gromov \cite{Gromov}, Gordon and Meyer 
\cite{Gordon_Meyer} and the author \cite{Csikos_flower}, \cite{Csikos_Schlafli} 
suggest that the Kneser--Poulsen conjecture could be true in a more general 
form, which we formulate below. 

Let $\mathcal B_N$ be the free Boolean algebra generated by $N\geq 1$ symbols 
$x_1,\dots,x_N$.  We denote the greatest  element of $\mathcal B_N$ by $X$, and the 
least element of $\mathcal B_N$ by $\emptyset$. Elements of $\mathcal B_N$ are 
equivalence classes of formal expressions built from the symbols 
$x_1,\dots,x_N$, $X$ and $\emptyset$, the binary operations $\cup$, $\cap$, 
and the unary operator $f\mapsto \bar{f}$. Two expressions are called equivalent 
if and only if we can prove their equality assuming that the operations satisfy 
the axioms of a Boolean algebra. We shall refer to an element of $\mathcal B_N$ 
by choosing a Boolean expression from its equivalence class, and we write 
``$=$'' between two Boolean expressions if they are equivalent. We shall also 
use the derived operator $f\setminus g=f\cap \bar{g}$ and the partial ordering 
$f\subseteq g\overset{\mathrm{def}}{\iff} f\cup g=g$.  We refer to 
\cite{Givant_Halmos} for more details on Boolean algebras.

Take a Boolean expression $f\in \mathcal B_N$ which can be represented by a 
formula built exclusively from the variables $x_1,\dots,x_N$ and the operations 
$\cup$, $\cap$, $\setminus$ in such a way that each of the variables occurs in 
the formula exactly once. For any pair of indices $i\neq j$, $1\leq i,j\leq N$, 
evaluate $f$ replacing the variables $x_k$, $k\notin\{i,j\}$  by $X$ or 
$\emptyset$ in all possible ways. It can be seen that the results of those 
evaluations that are not equal to   $X$ or $\emptyset$, are all equal to one another and to one of the expressions $x_i\cap x_j$, $x_i\setminus x_j$, $x_j\setminus 
x_j$, $x_i\cup x_j$. Let the sign $\epsilon_{ij}^f$ be $-1$ if the evaluations 
not equal to $X$ or $\emptyset$ are equal to $x_i\cap x_j$, and set 
$\epsilon_{ij}^f=1$ in the remaining three cases.

The generalization of the Kneser--Poulsen conjecture for Boolean 
expressions of balls claims that if the Boolean expression $f\in \mathcal B_N$ obeys  the conditions of the previous paragraph, and the points $\mathbf 
p_1,\dots,\mathbf p_N$ and $\mathbf q_1,\dots,\mathbf q_N$ in $\mathbb R^d$ 
satisfy the inequalities $\epsilon_{ij}^f(\d(\mathbf p_i,\mathbf p_j)- 
\d(\mathbf q_i,\mathbf q_j))\geq 0$ for all $0\leq i,j\leq N$, then 
\begin{equation}\label{general_Kneser}
\vol_d\left(f( B^d(\mathbf p_1,r_1),\dots,B^d(\mathbf p_N,r_N))\right)\geq 
\vol_d\left(f( B^d(\mathbf q_1,r_1),\dots,B^d(\mathbf q_N,r_N))\right)
\end{equation}
for any choice of the radii $r_1,\dots,r_N$. 

A suitable modification of the arguments of Bezdek and Connelly 
\cite{Bezdek_Connelly_2002} shows that this generalization of the 
Kneser--Poulsen conjecture is also true in the Euclidean plane (see 
\cite{Csikos_Schlafli}).

As it was pointed out by Capoyleas, Pach  \cite{Capoyleas_Pach}, and 
Gorbovickis \cite{Gorbovickis_strict}, the original Kneser--Poulsen 
conjecture for large congruent balls is closely related to the monotonicity of 
the mean width of a set under contractions. The relation is based on the formula
\begin{equation}\label{union_asymptotics}
\vol_d\left(\bigcup_{i=1}^N B^d(\mathbf p_i,r)\right)=\kappa_d 
r^d+\frac{d\kappa_d}{2}\boldsymbol{\omega}_d(\{\mathbf p_1,\dots,\mathbf p_N\})r^{d-1}+O(r^{d-2}),
\end{equation}
where $\kappa_d$ is the volume of the unit ball in $\mathbb R^d$, $\boldsymbol{\omega}_d(S)$ denotes the mean width of the bounded set $S\subset \mathbb R^d$. We remark that the mean width function $\boldsymbol{\omega}_d$ depends on the dimension $d$ of the ambient space, but only up to a constant factor. More explicitly, if $\Phi\colon\mathbb R^d\to \mathbb R^{\tilde d}$ is an isometric embedding, then we have $\frac{d\kappa_d}{\kappa_{d-1}}\boldsymbol{\omega}_d(S)=\frac{{\tilde d}\kappa_{\tilde d}}{\kappa_{{\tilde d}-1}}\boldsymbol{\omega}_{\tilde d}(\Phi(S))$ for any bounded set $S\subset \mathbb R^d$. Applying formula \eqref{union_asymptotics} and the fact that the Kneser--Poulsen conjecture is true if the dimension of the space is at least $N-1$ (see \cite{Gromov}),  Capoyleas and Pach  \cite{Capoyleas_Pach} proved that the mean width of a set cannot increase when the set is contracted. Using rigidity theory, Gorbovickis  \cite{Gorbovickis_strict} sharpened this result by proving that if the $d$-dimensional configurations $(\mathbf p_1,\dots,\mathbf p_N)$ and 
$(\mathbf q_1,\dots,\mathbf q_N)$ are not congruent and satisfy the inequalities $\d(\mathbf p_i,\mathbf p_j)\geq \d(\mathbf 
q_i,\mathbf q_j)$ for all $0\leq i,j\leq N$, then the strict inequality 
\[
\boldsymbol{\omega}_d(\{\mathbf p_1,\dots,\mathbf p_N\})>\boldsymbol{\omega}_d(\{\mathbf q_1,\dots,\mathbf q_N\})
\]

holds. This strict inequality, in return,  implies that the Kneser--Poulsen conjecture is true if the radius of the balls is bigger than a constant depending on the configurations of the centers.

Gorbovickis  \cite{Gorbovickis_strict}  proved also that for the volume of the intersection of large congruent balls we have 
\begin{equation}\label{intersection_asymptotics}
\vol_d\left(\bigcap_{i=1}^N B^d(\mathbf p_i,r)\right)=\kappa_d 
r^d-\frac{d\kappa_d}{2}\boldsymbol{\omega}_d(\{\mathbf p_1,\dots,\mathbf p_N\})r^{d-1}+O(r^{d-2}),
\end{equation}
thus, as a consequence of the strict monotonicity of the mean width, the above mentioned generalization of the Kneser--Poulsen conjecture is true also for the intersections of congruent balls if the radius of the balls is greater than a constant depending on the configurations of the centers.

In 2013 K.~Bezdek \cite{Bezdek_Fields} posed the problem of finding a suitable generalization of equations \eqref{union_asymptotics} and \eqref{intersection_asymptotics} for the volume of an arbitrary  Boolean expression of large congruent balls, and suggested to explore the interplay between the generalized Kneser--Poulsen conjecture and the monotonicity properties of the coefficient of $r^{d-1}$ in the general formula. In the present paper, we summarize the results of the research initiated by these questions.

The outline of the paper is the following. In Section \ref{sec:2}, we sharpen equation \eqref{union_asymptotics}, expressing the volume of the union of some large congruent balls with an error term of order $O(r^{d-3})$. The coefficients appearing in the formula are some constant multiples of the intrinsic volumes $V_0$, $V_1$, $V_2$ of the convex hull of the centers. In Section \ref{sec:3}, we show that if a Boolean expression $f(B_1,\dots, B_n)$ of some balls is bounded, then its volume can be obtained as a linear combination of the volumes of the unions of some of the balls. The coefficients of this inclusion-exclusion type formula, given in  Proposition \ref{prop:decomp}, depend purely on the Boolean expression $f$. These coefficients are used to define the Boolean analogues of the intrinsic volumes of the convex hull of a point set in Section \ref{sec:4}. Theorem \ref{flower_volume} gives a generalization of equation \eqref{union_asymptotics} for Boolean expressions of large balls using Boolean intrinsic volumes. In Section \ref{sec:5}, some classical facts on intrinsic volumes are generalized for Boolean intrinsic volumes. For example, it is known that the $k$th intrinsic volume of a polytope can be expressed in terms of the volumes of the $k$-dimensional faces and the angular measures of the normal cones of these faces. This formula is generalized for Boolean intrinsic volumes in Theorem \ref{thm:2}.  As an application of Theorem \ref{thm:2}, we prove that  the   $k$th Boolean intrinsic volumes corresponding to dual Boolean expressions differ only in a sign $(-1)^k$. This explains why the coefficients of $r^{d-1}$ in the equations \eqref{union_asymptotics} and \eqref{intersection_asymptotics} are opposite to one another. Theorem \ref{thm:3} provides a Boolean extension of the fact that the first intrinsic volume of a convex set is a constant  multiple of the integral of its support function. Section \ref{sec:6} is devoted to the proof of Theorem \ref{T:V_f,1}  on the monotonicity of the Boolean analogue of the first intrinsic volume.

\section{Comparison of the volume of a union of balls and the volume of its convex hull \label{sec:2}}
Every convex polytope $K\subset \mathbb R^d$ defines a decomposition of the 
space as follows.  Denote by $\mathcal F(K)$ the set of all faces of $K$, 
including $K$, and by $\mathcal F_k(K)$ the set of its $k$-dimensional 
faces. Let $\pi\colon \mathbb R^d\to K$ be the map assigning to a point $\mathbf 
x\in \mathbb R^d$ the unique point of $K$ that is closest to $\mathbf x$. For a 
face $L\in \mathcal F(K)$,  denote by $V(L,K)$ the preimage $\pi^{-1}(\relint 
L)$ of the relative interior of $L$.
As $K$ is the disjoint union of the relative interiors of its faces, $\mathbb R^d$ is the disjoint union of the sets $V(L,K)$, where $L$ is running over $\mathcal F(K)$.
If $L\in\mathcal F_k(K)$, then $V(L,K)$ is the Minkowski sum of the relative interior of $L$ and the normal cone 
\begin{equation}
N(L,K)=\{\mathbf u\in \mathbb R^{d}\mid \mathbf u\perp [L]\text{ and } \max_{\mathbf x\in K}\langle \mathbf u,\mathbf x\rangle \text{ is attained at a point }\mathbf x\in L\}
\end{equation}
of $K$ at $L$, where $[L]$ denotes the affine subspace spanned by $L$. Set  $n(L,K)=N(L,K)\cap B^d(\mathbf 0,1)$ and $\nu(L,K)=\Vol_{d-k}(n(L,K))/\kappa_{d-k}$. Division by $\kappa_{d-k}$ in the definition of $\nu(L,K)$ is advantageous because it makes the angle measure $\nu(L,K)$ of the normal cone $N(L,K)$ independent of the dimension $d$ of the ambient space $\mathbb R^d$, though the normal cone itself changes if we embed $K$ into a higher dimensional space. 

Denote by  $K_r=K+B^d(\mathbf 0,r)$ the distance $r$ parallel body of $K$. The decomposition 
\begin{equation}\label{decomposition}
\mathbb R^d=\bigcup_{L\in \mathcal F(K)}N(L,K)
\end{equation}
induces a decomposition of the parallel body $K_r$, which enables us to write the volume of $K_r$ as a polynomial of $r$
\begin{equation}\label{Steiner1}
\begin{aligned}
\Vol_d(K_r)&=\sum_{L\in \mathcal F(K)}\Vol_d(K_r\cap N(L,K))=\sum_{L\in \mathcal F(K)}\Vol_d(L+ r\,(n(L,K))\\&=\sum_{k=0}^d \kappa_{d-k}\left(\sum_{L\in \mathcal F_k(K)}\Vol_k(L) \nu(L,K)\right)r^{d-k}.
\end{aligned}
\end{equation}
Equation \eqref{Steiner1} is a special case of Steiner's classical formula (see, e.g., \cite[equation (4.2.27)]{Schneider})
\begin{equation} \label{Steiner2}
\Vol_d(K+B(\mathbf 0,r))= \sum_{k=0}^d\binom{d}{k}W_k^d(K)r^k=\sum_{k=0}^d \kappa_{d-k}V_k(K)r^{d-k},
\end{equation}
expressing the volume of the distance $r$ parallel body of an arbitrary compact convex set $K$ as a polynomial of $r$, in which the normalized coefficients $W_k^d(K)$ and $V_k(K)$ are the quermassintegrals and intrinsic volumes of $K$ respectively. It is known that the intrinsic volumes are continuous functions on the space of compact convex sets endowed with the Hausdorff metric (see \cite[Section 4.2]{Schneider}), and $V_0(K)\equiv 1$. Comparing \eqref{Steiner1} and \eqref{Steiner2} we obtain the formula 
\begin{equation}\label{intrinsic_volume}
V_k(K)=\sum_{L\in \mathcal F_k(K)}\Vol_k(L) \nu(L,K)
\end{equation}
expressing the intrinsic volumes of a polytope $K$.
\begin{prop} \label{prop:comparison}
Let $\mathbf p_1,\dots,\mathbf p_N$ be a fixed set of points in $\mathbb R^d$, $K=\mathrm{conv}(\{\mathbf p_1,\dots,\mathbf p_N\})$ be the convex hull of the points. Denote by $B_i=B^d(\mathbf p_i,r)$ the ball of radius $r$ centered at $\mathbf p_i$. Then we have
\begin{equation} 
\left|\Vol_d(K_r)-\Vol_d\Big(\bigcup_{i=1}^N B_i\Big)\right|=O(r^{d-3})
\end{equation}
for large values of $r$.
\end{prop}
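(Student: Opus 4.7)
My plan is to reduce the volume difference to a sum of contributions over the faces of $K$ via the normal-cone partition \eqref{decomposition}, and then to estimate each contribution using a Pythagorean argument. First observe that each ball $B_i=B^d(\mathbf p_i,r)=\mathbf p_i+B^d(\mathbf 0,r)$ is contained in $K+B^d(\mathbf 0,r)=K_r$, so $\bigcup_{i=1}^N B_i\subseteq K_r$ and it suffices to estimate
\[
\Vol_d\Bigl(K_r\setminus\bigcup_{i=1}^N B_i\Bigr)=\sum_{L\in\mathcal F(K)}\Vol_d\Bigl((K_r\cap V(L,K))\setminus\bigcup_{i=1}^N B_i\Bigr),
\]
using the partition from \eqref{decomposition} as in the derivation of \eqref{Steiner1}.

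For a $0$-dimensional face $L=\{\mathbf p_j\}$ of $K$ (every vertex of $K$ is, by definition of the convex hull, one of the given $\mathbf p_i$'s), the slice $K_r\cap V(L,K)=\mathbf p_j+r\,n(L,K)$ sits inside $B^d(\mathbf p_j,r)=B_j$, so this term vanishes. For a face $L$ of dimension $k\geq 1$, I would parametrize
\[
K_r\cap V(L,K)=\{\,y+u\mid y\in\relint L,\ u\in N(L,K),\ |u|\leq r\,\},
\]
and for each $y\in L$ pick a vertex $\mathbf p_{i(y)}$ of $L$ closest to $y$. Vertices of $L$ are vertices of $K$, hence among the given points, and $|y-\mathbf p_{i(y)}|\leq\mathrm{diam}(K)$. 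Because $\mathbf p_{i(y)}\in[L]$ while $u\perp[L]$, the Pythagorean identity gives
\[
|y+u-\mathbf p_{i(y)}|^2=|y-\mathbf p_{i(y)}|^2+|u|^2,
\]
so $y+u\in B_{i(y)}$ whenever $|u|^2\leq r^2-|y-\mathbf p_{i(y)}|^2$.

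Consequently, for $r$ large the fibre over $y$ of $(K_r\cap V(L,K))\setminus\bigcup_i B_i$ has $(d-k)$-dimensional volume bounded by
\[
\nu(L,K)\,\kappa_{d-k}\bigl[r^{d-k}-(r^2-|y-\mathbf p_{i(y)}|^2)^{(d-k)/2}\bigr],
\]
which, by a first-order Taylor expansion of $(1-x)^{(d-k)/2}$ at $x=0$, is $O(r^{d-k-2})$ uniformly in $y\in L$. Integrating over the bounded set $L$ gives an $O(r^{d-k-2})\subseteq O(r^{d-3})$ contribution since $k\geq 1$, and summing over the finitely many positive-dimensional faces of $K$ yields the desired estimate. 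The argument is essentially bookkeeping once the normal-cone slicing and the Pythagorean step are in place; I do not expect any substantial obstacle, noting only that the top-dimensional face $L=K$ (when $\dim K=d$) is absorbed into the same estimate and contributes only $O(r^{-2})$.
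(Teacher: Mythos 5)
Your proof is correct and follows essentially the same route as the paper's: both decompose $K_r$ by the normal-cone partition \eqref{decomposition}, observe that the cells over vertices are entirely covered by the corresponding balls, and bound the uncovered part of each cell over a $k$-dimensional face ($k\geq 1$) by a spherical-shell volume of order $r^{d-k-2}$. The only difference is that you re-derive the Capoyleas--Pach inclusion $K_{r-\Delta^2/r}\subseteq\bigcup_{i=1}^N B_i$ fibre-by-fibre via the Pythagorean identity, whereas the paper simply cites it.
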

\begin{proof} Denote by $\Delta$ the diameter of $K$, and set $r'=r-\Delta^2/r$. It is easy to see that if $r\geq \Delta$, then $K_{r'}\subseteq \bigcup_{i=1}^NB_i\subseteq K_r$, (see \cite{Capoyleas_Pach}). Intersecting the decomposition \eqref{decomposition} with the union of the balls, we get
\[
\bigcup_{i=1}^NB_i=\bigcup_{L\in\mathcal F(K)}\Big(N(L,K)\cap\Big(\bigcup_{i=1}^NB_i\Big)\Big).
\]
When $L\in\mathcal F_0(K)$ is a vertex, we have $N(L,K)\cap\left(\bigcup_{i=1}^NB_i\right)=N(L,K)\cap K_r$. Thus,
\begin{equation*} 
K_r\setminus \Big(\bigcup_{i=1}^N B_i\Big)\subseteq \bigcup_{k=1}^d
\bigcup_{L\in\mathcal F_k(K)} N(L,K)\cap(K_r\setminus K_{r'}),
\end{equation*}
and 
\begin{equation} 
\left|\Vol_d(K_r)-\Vol_d\Big(\bigcup_{i=1}^N B_i\Big)\right|\leq \sum_{k=1}^d \Vol_k(L)\kappa_{d-k}\nu(L,K)\left(r^{d-k}-\left(r-\frac{\Delta^2}{r}\right)^{d-k}\right)=O(r^{d-3}),
\end{equation}
as claimed.
\end{proof}
\begin{cor}\label{Cor:union_asymptotics} Using the notations of Proposition \ref{prop:comparison}, we have
\begin{equation} \label{eq:union}
\Vol_d\Big(\bigcup_{i=1}^N B_i\Big)=\kappa_dr^d+\kappa_{d-1}V_1(K)r^{d-1}+\kappa_{d-2}V_2(K)r^{d-2}+O(r^{d-3}).
\end{equation} 
\end{cor}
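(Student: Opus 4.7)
The plan is to combine Proposition \ref{prop:comparison} with Steiner's formula \eqref{Steiner2} applied to the polytope $K=\mathrm{conv}(\{\mathbf p_1,\dots,\mathbf p_N\})$. The proposition guarantees that replacing the union of balls by the parallel body $K_r$ costs only an $O(r^{d-3})$ error, so it is enough to expand $\Vol_d(K_r)$ and keep the top three terms.

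Concretely, I would first invoke Proposition \ref{prop:comparison} to write
\[
\Vol_d\Big(\bigcup_{i=1}^N B_i\Big)=\Vol_d(K_r)+O(r^{d-3}).
\]
Then I would apply Steiner's formula \eqref{Steiner2} to the compact convex set $K$, giving
\[
\Vol_d(K_r)=\sum_{k=0}^d \kappa_{d-k}V_k(K)\,r^{d-k}.
\]
Splitting off the $k=0,1,2$ terms and using $V_0(K)\equiv 1$, the remaining sum $\sum_{k=3}^d \kappa_{d-k}V_k(K)\,r^{d-k}$ is a polynomial in $r$ of degree at most $d-3$ (with coefficients depending only on $K$, hence on the fixed points $\mathbf p_1,\dots,\mathbf p_N$), so it is $O(r^{d-3})$ as $r\to\infty$. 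Combining the two estimates yields the claimed expansion.

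There is essentially no obstacle here: the content is entirely in Proposition \ref{prop:comparison} (the comparison between the union of balls and the Euclidean parallel body of their convex hull up to an $O(r^{d-3})$ error) and in the classical Steiner formula recalled in \eqref{Steiner2}. The corollary is just the bookkeeping step of adding these two expansions.
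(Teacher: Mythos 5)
Your proof is correct and is exactly the intended argument: the corollary follows immediately by combining Proposition \ref{prop:comparison} with Steiner's formula \eqref{Steiner2}, using $V_0(K)\equiv 1$ and absorbing the terms with $k\geq 3$ into the $O(r^{d-3})$ error. The paper treats this as an immediate consequence and gives no separate proof; your bookkeeping matches it.
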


\section{Combinatorics of Boolean expressions \label{sec:3}}

For a subset $I$ of the set $[N]=\{1,\dots,N\}$, define $a_I\in \mathcal B_N$ by $a_I=(\bigcap_{j\notin I}x_j) \setminus (\bigcup_{i\in I}x_i)$. The elements $a_I$, $(I\subseteq [N])$ are the \emph{atomic elements} of $\mathcal B_N$. Any $f\in \mathcal B_N$ can be decomposed uniquely as $f=\bigcup_{a_I\subseteq f}a_I$. In particular, $\mathcal B_N$ has $2^{2^N}$ elements.

\begin{defin} The \emph{reduced Euler characteristic  $\tilde\chi_N(f)$ of $f\in \mathcal B_N$} is the integer $\tilde\chi_N(f)=\sum_{a_I\subseteq f}(-1)^{|I|+1}$.
\end{defin}
Obviously, the reduced Euler characteristic of a Boolean expression is an integer number in the interval $[-2^{N-1},2^{N-1}]$.

\begin{prop}\label{zero_Euler} If $f\in\mathcal B_N$  can be represented by a formal expression which does  not contain all the variables $x_1,\dots,x_N$, then  $\tilde\chi_N(f)=0$.

\end{prop}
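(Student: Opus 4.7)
The plan is to use the fact that the atomic element $a_I$ corresponds to the truth assignment $\sigma_I\colon\{x_1,\dots,x_N\}\to\{\emptyset,X\}$ defined by $\sigma_I(x_j)=X$ for $j\notin I$ and $\sigma_I(x_i)=\emptyset$ for $i\in I$. Indeed, evaluating any Boolean expression $f$ under the substitution $\sigma_I$ yields either $X$ or $\emptyset$, and a standard argument (by induction on the complexity of the expression) shows that this evaluation equals $X$ if and only if $a_I\subseteq f$ in $\mathcal B_N$. This is the only nontrivial background fact I will invoke; it expresses that atomic elements play the role of elementary conjunctions in disjunctive normal form.

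Suppose $f\in\mathcal B_N$ admits a formal representative in which some variable, say $x_k$, does not appear. Then the value of $f$ under $\sigma_I$ is computed without ever reading the $k$-th coordinate of $\sigma_I$, so it depends only on $\sigma_I\restriction\{x_j : j\neq k\}$. Writing $I\triangle\{k\}$ for the symmetric difference, the assignments $\sigma_I$ and $\sigma_{I\triangle\{k\}}$ agree off the $k$-th coordinate, hence produce the same value of $f$. By the correspondence recalled above,
\[
a_I\subseteq f\iff a_{I\triangle\{k\}}\subseteq f.
\]

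Consequently the collection of indices $\{I\subseteq[N] : a_I\subseteq f\}$ is closed under the involution $I\mapsto I\triangle\{k\}$, which has no fixed points (since it always changes the parity of $|I|$). Grouping the defining sum of $\tilde\chi_N(f)$ into orbits of this involution,
\[
\tilde\chi_N(f)=\sum_{a_I\subseteq f}(-1)^{|I|+1}=\sum_{\substack{I\subseteq[N]\setminus\{k\}\\ a_I\subseteq f}}\Bigl((-1)^{|I|+1}+(-1)^{|I\cup\{k\}|+1}\Bigr)=0,
\]
since each pair contributes $(-1)^{|I|+1}+(-1)^{|I|+2}=0$. This yields the claim.

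The only step that could require careful attention is the correspondence between atomic elements and truth assignments, i.e., the verification that $a_I\subseteq f$ is equivalent to $f$ evaluating to $X$ under $\sigma_I$; this is routine by structural induction on a defining formula for $f$ using the identities $a_I\subseteq g\cup h \iff (a_I\subseteq g \text{ or } a_I\subseteq h)$, $a_I\subseteq g\cap h\iff (a_I\subseteq g \text{ and } a_I\subseteq h)$, and $a_I\subseteq \bar g\iff a_I\not\subseteq g$, which all follow from $a_I$ being an atom of $\mathcal B_N$. Once this correspondence is in hand, the pairing argument above delivers the result immediately.
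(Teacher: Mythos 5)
Your proof is correct and follows essentially the same route as the paper: the paper pairs the two atoms $\iota(a_I)\cap x_N$ and $\iota(a_I)\cap\bar x_N$ of $\mathcal B_N$ lying over each atom $a_I$ of $\mathcal B_{N-1}$, which is exactly your fixed-point-free involution $I\mapsto I\triangle\{k\}$, and both arguments conclude by the same parity cancellation. The truth-assignment formulation you use to justify $a_I\subseteq f\iff a_{I\triangle\{k\}}\subseteq f$ is just a restatement of the paper's observation that $a_I\subseteq g\iff \iota(a_I)\cap x_N\subseteq f\iff \iota(a_I)\cap\bar x_N\subseteq f$.
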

\begin{proof}We may assume without loss of generality that $f$ can be written as an expression not using the variable $x_N$. This means that if $\iota\colon \mathcal B_{N-1}\to \mathcal B_{N}$ is the natural embedding, then $f=\iota(g)$ for some $g\in \mathcal B_{N-1}$. If $I\subseteq [N-1]$, and $a_I\in \mathcal B_{N-1}$ is the corresponding atomic expression in $\mathcal B_{N-1}$,  then $\iota(a_I)\cap x_{N}$ and $\iota(a_I)\cap \bar x_{N}$ are atomic expressions in $\mathcal B_{N}$ corresponding to the index sets $I\subseteq [N]$ and $I\cup\{N\}\subseteq [N]$  respectively, furthermore,
\[
a_I\subseteq g \iff \iota(a_I)\cap x_{N}\subseteq f \iff \iota(a_I)\cap \bar x_{N}\subseteq f.
\] 
Thus,
\[\tilde\chi_{N}(f)=\sum_{a_I\subseteq g} ((-1)^{|I|}+(-1)^{|I\cup \{N\}|})=0.
\qedhere
\]
\end{proof}

\begin{prop}\label{complement_Euler}
If $\bar f$ is the complement of $f\in\mathcal B_N$, then $\tilde\chi_N(\bar f)=-\tilde\chi_N(f)$.
\end{prop}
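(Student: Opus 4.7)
The plan is to exploit the fact that the atomic elements $\{a_I : I \subseteq [N]\}$ form a partition of the Boolean algebra in the sense that every $g \in \mathcal{B}_N$ decomposes uniquely as $g = \bigcup_{a_I \subseteq g} a_I$, and crucially that $a_I \subseteq \bar{f}$ if and only if $a_I \not\subseteq f$. This latter equivalence holds because the atoms are pairwise disjoint and their union is $X$, so each atom lies either in $f$ or in $\bar f$, exclusively.

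From this observation I would immediately write
\[
\tilde\chi_N(f) + \tilde\chi_N(\bar f) = \sum_{a_I \subseteq f}(-1)^{|I|+1} + \sum_{a_I \not\subseteq f}(-1)^{|I|+1} = \sum_{I \subseteq [N]}(-1)^{|I|+1}.
\]
The proof then reduces to showing that the right-hand side vanishes, which is the only step requiring any computation.

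For the vanishing, the cleanest route is to apply Proposition \ref{zero_Euler} to the greatest element $X \in \mathcal B_N$: since $X$ admits a representation that uses none of the variables $x_1,\dots,x_N$, its reduced Euler characteristic is $0$. But every atom $a_I$ satisfies $a_I \subseteq X$, so $\tilde\chi_N(X) = \sum_{I \subseteq [N]}(-1)^{|I|+1}$, giving the desired identity. Alternatively, one may simply invoke the binomial identity $\sum_{k=0}^N \binom{N}{k}(-1)^k = (1-1)^N = 0$ for $N \geq 1$.

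There is no real obstacle here; the only thing to be careful about is the bookkeeping of signs and the verification that the atoms indeed partition $X$ so that the complement relation $a_I \subseteq \bar f \iff a_I \not\subseteq f$ holds unambiguously. Once that is noted, the identity $\tilde\chi_N(\bar f) = -\tilde\chi_N(f)$ drops out in one line.
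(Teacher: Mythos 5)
Your proposal is correct and follows essentially the same route as the paper: both reduce the claim to the identity $\tilde\chi_N(f)+\tilde\chi_N(\bar f)=\tilde\chi_N(X)=0$, with the vanishing of $\tilde\chi_N(X)$ coming from the alternating binomial sum (your alternative via Proposition \ref{zero_Euler} is also valid, since $X$ is an expression using none of the variables). The only cosmetic difference is that you carry out the additivity over atoms explicitly rather than citing it as a property of $\tilde\chi_N$ on disjoint unions.
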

\begin{proof} It is clear from the definition of the reduced Euler characteristic that if $f\cap g=\emptyset$, then $\tilde\chi_N(f\cup g)=\tilde\chi_N(f)+\tilde\chi_N(g)$. We also have 
\[\tilde\chi_N(X)=\sum_{i=0}^N(-1)^{i+1}\binom{N}{i}=0,\]
so $\tilde\chi_N(f)+\tilde\chi_N(\bar f)=\tilde\chi_N(f\cup \bar f)=\tilde\chi_N(X)=0$.
\end{proof}
Recall that the contradual $f^{\bar{*}}$ of $f\in \mathcal B_N$ is formed by replacing each variable $x_i$ by its complement $\bar x_i$, while the dual $ f^*=\overline{f^{\bar{*}}}$ of $f$ is the complement of the contradual of $f$.
\begin{prop}\label{Euler_dual}
For any $f\in\mathcal B_N$, we have 
\[
-\tilde\chi_N(f^*)=\tilde\chi_N(f^{\bar *})=(-1)^N\tilde\chi_N(f).
\]
\end{prop}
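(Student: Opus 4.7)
The first equality $-\tilde\chi_N(f^*)=\tilde\chi_N(f^{\bar *})$ is immediate from the definitions: since $f^*=\overline{f^{\bar *}}$, Proposition \ref{complement_Euler} gives $\tilde\chi_N(f^*)=\tilde\chi_N(\overline{f^{\bar *}})=-\tilde\chi_N(f^{\bar *})$. So the real content is the second equality $\tilde\chi_N(f^{\bar *})=(-1)^N\tilde\chi_N(f)$, and my plan is to prove it by tracking how contradualization permutes the atomic elements.

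The key observation is that rewriting the atom $a_I=\bigl(\bigcap_{j\notin I}x_j\bigr)\cap\bigl(\bigcap_{i\in I}\bar x_i\bigr)$ makes it manifest how the substitution $x_i\mapsto \bar x_i$ acts on atoms: it swaps the roles of $I$ and $[N]\setminus I$, sending $a_I$ to $a_{[N]\setminus I}$. Because contradualization is the Boolean-algebra endomorphism determined by $x_i\mapsto \bar x_i$ (in fact an involution), it preserves inclusions among Boolean expressions, and therefore
\[
a_I\subseteq f^{\bar *} \iff a_{[N]\setminus I}\subseteq f.
\]

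With this in hand the proof is just a reindexing. Substituting $J=[N]\setminus I$ (so $|I|=N-|J|$) in the definition of the reduced Euler characteristic yields
\[
\tilde\chi_N(f^{\bar *})=\sum_{a_I\subseteq f^{\bar *}}(-1)^{|I|+1}=\sum_{a_J\subseteq f}(-1)^{N-|J|+1}=(-1)^N\sum_{a_J\subseteq f}(-1)^{|J|+1}=(-1)^N\tilde\chi_N(f),
\]
which gives the claim. There is no real obstacle here; the only thing to verify carefully is the bijection of atoms under contradualization, which is transparent once one writes an atom in the symmetric form above.
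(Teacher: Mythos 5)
Your proof is correct and follows essentially the same route as the paper: reduce the first equality to Proposition \ref{complement_Euler}, then use the fact that contradualization is an order-preserving involution sending $a_I$ to $a_{[N]\setminus I}$ and reindex the sum. The only difference is that you spell out the verification of the atom bijection (via the symmetric form of $a_I$), which the paper states without proof.
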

\begin{proof}
The first equation is a corollary of Proposition \ref{complement_Euler}, so it is enough to show the second one. The contradual operation preserves the ordering and maps the atom $a_I$ to $a_{[N]\setminus I}$. Consequently,
\[
\tilde\chi_N(f^{\bar *})=\sum_{a_I\subseteq f^{\bar *} }(-1)^{|I|+1}=(-1)^N\sum_{a_{[N]\setminus I}\subseteq f} (-1)^{|[N]\setminus I|+1}=(-1)^N\tilde\chi_N(f).\qedhere
\]
\end{proof}
Let $\mathcal L_N$ be the sublattice of $\mathcal B_N$ generated by the elements $x_1,\dots,x_N$ and the operations $\cup$ and $\cap$. An element $f\in \mathcal B_n$ belongs to $\mathcal L_n$ if and only if $f\neq \emptyset$ and whenever $a_I\subseteq f$ and $J\subseteq I$ we also have $a_J\subseteq f$. This means that we can associate to any element $f\in \mathcal L_N$ an abstract  simplicial complex $P_f=\{I\subset [N]\mid a_{I}\subseteq f\}$. This assignment gives a bijection between $\mathcal L_n$ and abstract simplicial complexes on the vertex set $[N]$ different from the abstract $(N-1)$-dimensional simplex. In this special case, the reduced Euler characteristic of $f$ is one less than the ordinary Euler characteristic of $P_f$. The difference is due to the fact that $\emptyset$ is not counted as a $-1$-dimensional face when we compute the Euler characteristic, but it is taken into account in the computation of $\tilde\chi_N(f)$. The number of elements of $\mathcal L_N$ is $M_N-2$, where $M_N$ is the 
$N$th Dedekind number.

There is a sublattice $\mathcal C_N\supset \mathcal L_N$   of $\mathcal B_N$ consisting of expressions that can be built from the variables $x_1,\dots,x_N$ using only the operations $\cup$, $\cap$, and $\setminus$. The lattice $\mathcal C_N$ contains exactly those elements of $\mathcal B_N$ that do not contain the atomic expression $a_{[N]}$. This way, $\mathcal C_N$ has $2^{2^N-1}$ elements. 

Denote by $\mathcal M_N$ the linear space of real valued functions $\mu\colon \mathcal C_N\to \mathbb R$ such that $\mu(f\cup g)=\mu(f)+\mu(g)$ if $f\cap g=\emptyset$. As $\mu\in \mathcal C_N$ is uniquely determined by its values on the atomic expressions $a_I$, $(I\subsetneq [N])$, $\dim \mathcal M_N=2^N-1$.

For $\emptyset\neq I \subseteq [N]$, let $u_I\in \mathcal L_N$ be the union $u_I=\bigcup_{i\in I}x_i$.

\begin{prop}\label{prop:decomp} For any $f\in \mathcal C_N$, there is a unique collection of integers $m_{f,I}\in \mathbb Z$ for $(\emptyset\neq I\subseteq [N])$ such that for any $\mu\in \mathcal M_N$, we have 
\begin{equation}\label{decomp}
\mu(f)=\sum_{\emptyset\neq I\subseteq [N]} m_{f,I}\mu(u_I).
\end{equation}
\end{prop}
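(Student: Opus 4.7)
The plan is to separate the claim into uniqueness of the $m_{f,I}$ and existence with integer coefficients; both reduce to classical Möbius inversion on the Boolean lattice of subsets of $[N]$.

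For uniqueness, since $\dim\mathcal M_N = 2^N - 1$ and there are exactly $2^N-1$ candidate linear functionals $\mu\mapsto\mu(u_I)$ on $\mathcal M_N$, one for each $\emptyset\neq I\subseteq [N]$, it is enough to show that these functionals are linearly independent. Suppose $\sum_{\emptyset\neq I\subseteq[N]} m_I\,\mu(u_I)=0$ for every $\mu\in\mathcal M_N$. For each $J_0\subsetneq [N]$, let $\mu_{J_0}\in\mathcal M_N$ be the unique additive function with $\mu_{J_0}(a_J)=\delta_{J,J_0}$; since $a_{J_0}\subseteq u_I$ iff $I\not\subseteq J_0$, specializing to $\mu=\mu_{J_0}$ yields $\sum_{I\not\subseteq J_0} m_I = 0$. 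Setting $T(J)=\sum_{\emptyset\neq I\subseteq J}m_I$, this says $T$ is constant on $\{J\subsetneq [N]\}$, and from $T(\emptyset)=0$ one gets $T\equiv 0$ on the whole power set of $[N]$; Möbius inversion then forces every $m_I$ to vanish.

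For existence, I will use the unique atomic decomposition $f = \bigsqcup_{J\subsetneq[N],\,a_J\subseteq f}a_J$ and the additivity of $\mu$ to reduce the claim to showing that each $\mu(a_J)$, $J\subsetneq[N]$, is an integer linear combination of the $\mu(u_I)$'s. Writing $v_K=\bigcap_{i\in K}x_i$ for $\emptyset\neq K\subseteq [N]$, I observe that $a_J=v_{[N]\setminus J}\setminus u_J$ and $v_{[N]\setminus J}\cap u_J = \bigcup_{i\in J}v_{\{i\}\cup([N]\setminus J)}$ by distributivity. Finite additivity provides the inclusion--exclusion identity $\mu(u_I)=\sum_{\emptyset\neq L\subseteq I}(-1)^{|L|+1}\mu(v_L)$, a triangular system with $\pm 1$ diagonal whose Möbius inverse $\mu(v_K)=\sum_{\emptyset\neq L\subseteq K}(-1)^{|L|+1}\mu(u_L)$ also has integer coefficients. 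Substituting these into
\[
\mu(a_J)=\mu(v_{[N]\setminus J})-\sum_{\emptyset\neq L\subseteq J}(-1)^{|L|+1}\mu(v_{L\cup([N]\setminus J)})
\]
and collecting coefficients produces an integer expansion of $\mu(a_J)$, and hence of $\mu(f)$, in terms of the $\mu(u_I)$'s.

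The main technical content is the integrality of the $m_{f,I}$: the uniqueness argument already forces $m_{f,I}\in\mathbb Q$, and the inclusion--exclusion step above is what upgrades this to $\mathbb Z$. The one point to check carefully while writing out the full proof is the Möbius inverse just used, which ultimately rests on the Möbius function of the Boolean lattice being $\pm 1$-valued, so no subtle obstruction arises.
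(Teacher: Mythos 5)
Your proof is correct and follows essentially the same route as the paper: existence and integrality come from the atomic decomposition of $f$ together with inclusion--exclusion expressing each $\mu(a_J)$ as an integer combination of the $\mu(u_I)$ (the paper does this in a single step by applying inclusion--exclusion to the sets $x_k\setminus u_J$, whereas you detour through the intersections $v_K=\bigcap_{i\in K}x_i$), and uniqueness is linear algebra in the $(2^N-1)$-dimensional space $\mathcal M_N$. The only genuine divergence is local: you verify the linear independence of the functionals $\mu\mapsto\mu(u_I)$ directly via a M\"obius-inversion argument, while the paper deduces it from the fact that these $2^N-1$ functionals span $\mathcal M_N^*$ (a consequence of the existence half); both arguments are valid.
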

\begin{proof} There is a natural embedding $\ev\colon \mathcal C_N\to \mathcal M_N^*$ of $\mathcal C_N$ into the dual space of $\mathcal M_N$ given by the evaluation map $\ev\colon f\mapsto \ev_f$, where $\ev_f(\mu)=\mu(f)$ for any $\mu\in \mathcal M_N$. The proposition claims that for any $f\in\mathcal C_N$, $\ev_f$ can be decomposed uniquely as an integer coefficient linear combination of the evaluations $\ev_{u_I}$, $(\emptyset\neq I\subseteq [N])$.

Any $f\in \mathcal C_N$ has an atomic decomposition $f=\bigcup_{a_I\subseteq f}a_I$, showing that 
\begin{equation}\label{uniora_bontas1}
\ev_f= \sum_{a_I\subseteq f}\ev_{a_I}.
\end{equation} 
Applying the inclusion--exclusion formula 
\[
\mu\left(\bigcap_{k\in K} A_k\right)=\sum_{\emptyset\neq J\subseteq K}(-1)^{|J|+1}\mu\left(\bigcup_{j\in J}A_j\right)
\]
for the Boolean expressions $A_k=x_k\setminus u_I$, $k\in K=[N]\setminus I$,  we obtain

\begin{equation}\label{uniora_bontas2}
\begin{aligned}
\mu(a_I)&=\sum_{\emptyset\neq J\subseteq ([N]\setminus I)}(-1)^{|J|+1}\mu(u_J \setminus u_I)=\sum_{\emptyset\neq J\subseteq ([N]\setminus I)}(-1)^{|J|+1}(\mu(u_{I\cup J})-\mu(u_I))\\&=\sum_{I\subseteq K\subseteq [N]}(-1)^{|K\setminus I|+1}\mu(u_{K}),
\end{aligned}
\end{equation}
for any $\mu\in \mathcal M_N$ and $I\neq [N]$.

Equations \eqref{uniora_bontas1} and \eqref{uniora_bontas2} show that $\ev_f$ can be written as a linear combination of the evaluations $\ev_{u_I}$, $(\emptyset\neq I\subseteq [N]))$ with integer coefficients.

To show uniqueness of the coefficients $m_{f,I}$, observe that the evaluations $\ev_{a_I}$, $(\emptyset\neq I\subseteq [N])$ form a basis of $\mathcal M_N^*$, and as the linear space spanned by the $2^N-1$ evaluations $\ev_{u_I}$, $(\emptyset\neq I\subseteq [N])$ contains this basis, it is the whole space $\mathcal M_N^*$. Since $\dim \mathcal M_N^*=2^N-1$, the evaluations 
$\ev_{u_I}$, $(\emptyset\neq I\subseteq [N])$ are linearly independent. \end{proof} 

\begin{prop} The sum $\sum_{\emptyset\neq I\subseteq [N]}m_{f,I}$ of the coefficients is $1$ if $a_{\emptyset}\subseteq f$ and $0$ otherwise.
\end{prop}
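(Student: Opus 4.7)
My plan is to exhibit a specific additive function $\mu^\star \in \mathcal{M}_N$ which takes the value $1$ on every $u_I$, and then simply evaluate both sides of \eqref{decomp} at $\mu^\star$. If such a $\mu^\star$ exists, then Proposition \ref{prop:decomp} forces
\[
\mu^\star(f) = \sum_{\emptyset\neq I\subseteq [N]} m_{f,I}\,\mu^\star(u_I) = \sum_{\emptyset\neq I\subseteq [N]} m_{f,I},
\]
so the claim reduces to showing $\mu^\star(f) = 1$ when $a_\emptyset\subseteq f$ and $\mu^\star(f)=0$ otherwise.

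The natural candidate is the indicator of containment of the atom $a_\emptyset=\bigcap_{j=1}^N x_j$: set $\mu^\star(f)=1$ if $a_\emptyset\subseteq f$ and $\mu^\star(f)=0$ if not. To see that $\mu^\star\in \mathcal{M}_N$, suppose $f,g\in \mathcal{C}_N$ with $f\cap g=\emptyset$. Then the atomic decompositions of $f$ and $g$ involve disjoint sets of atoms, so the single atom $a_\emptyset$ is contained in at most one of $f,g$, and $a_\emptyset \subseteq f\cup g$ iff $a_\emptyset\subseteq f$ or $a_\emptyset\subseteq g$. Therefore $\mu^\star(f\cup g)=\mu^\star(f)+\mu^\star(g)$, confirming additivity on disjoint unions.

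Next I verify that $\mu^\star(u_I)=1$ for every nonempty $I\subseteq [N]$. Since $a_\emptyset = \bigcap_{j=1}^N x_j \subseteq x_i$ for each $i\in I$, and $x_i\subseteq u_I$, we indeed have $a_\emptyset\subseteq u_I$; hence $\mu^\star(u_I)=1$. Plugging $\mu^\star$ into \eqref{decomp} then yields the desired identity.

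There is essentially no obstacle here beyond choosing the right measure: once $\mu^\star$ is identified as the $a_\emptyset$-indicator, additivity is immediate because atoms of $\mathcal B_N$ are pairwise disjoint, and the values on the generators $u_I$ fall out of the containment $a_\emptyset\subseteq x_i$. The uniqueness part of Proposition \ref{prop:decomp} guarantees that the value $\sum_I m_{f,I}$ is determined by $\mu^\star(f)$, finishing the argument.
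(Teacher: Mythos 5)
Your proof is correct and follows essentially the same route as the paper: the additive function $\mu^\star$ you construct (the indicator of $a_\emptyset\subseteq f$) is exactly the element of $\mathcal M_N$ the paper defines by setting $\mu(a_\emptyset)=1$ and $\mu(a_I)=0$ for $I\neq\emptyset$, and both arguments conclude by evaluating \eqref{decomp} at this function. No gaps; the verification that $\mu^\star(u_I)=1$ via $a_\emptyset\subseteq x_i\subseteq u_I$ matches the paper's reasoning.
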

\begin{proof}Let $\mu\in \mathcal M_N$ be the additive function the values of which on atoms are given by 
\[
\mu(a_I)=\begin{cases}0&\text{ if } I\neq \emptyset\\1&\text { if }I=\emptyset.\end{cases}
\]
Then $\mu(u_I)=1$, for all $\emptyset\neq I\subseteq[N]$. Applying \eqref{decomp} to $\mu$, we obtain
\[
\sum_{\emptyset\neq I\subseteq [N]}m_{f,I}=\sum_{\emptyset\neq I\subseteq [N]}m_{f,I}\mu(u_I)=\mu(f)=\sum_{a_I\subseteq f}\mu(a_I)=\begin{cases}0&\text{ if } a_{\emptyset}\not\subseteq f\\1&\text { if }a_{\emptyset}\subseteq f.\end{cases}\qedhere
\] 
\end{proof}
\begin{prop}\label{Kronecker}
For any $\emptyset\neq I\subseteq [N]$, $m_{u_I,J}=\delta_{I,J}$ holds, where $\delta_{I,J}$ is the Kronecker delta symbol.
\end{prop}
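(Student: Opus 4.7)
The plan is to read this off directly from the uniqueness clause of Proposition \ref{prop:decomp}. Fix $\emptyset\neq I\subseteq[N]$. Applied to $f=u_I$, that proposition asserts the existence of a unique integer tuple $(m_{u_I,J})_{\emptyset\neq J\subseteq[N]}$ with
\[
\mu(u_I)=\sum_{\emptyset\neq J\subseteq[N]} m_{u_I,J}\,\mu(u_J)\qquad\text{for every }\mu\in\mathcal M_N.
\]

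Now I simply observe that the Kronecker-delta choice gives such a decomposition: for every $\mu\in\mathcal M_N$,
\[
\mu(u_I)=\sum_{\emptyset\neq J\subseteq[N]} \delta_{I,J}\,\mu(u_J),
\]
trivially. Equivalently, at the level of evaluations in $\mathcal M_N^*$, $\ev_{u_I}=\sum_{J}\delta_{I,J}\,\ev_{u_J}$. Since the evaluations $\ev_{u_J}$ for $\emptyset\neq J\subseteq[N]$ were shown in the proof of Proposition \ref{prop:decomp} to be linearly independent (they span the $(2^N-1)$-dimensional space $\mathcal M_N^*$), the coefficients in any such expansion are uniquely determined. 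Comparing the two expansions forces $m_{u_I,J}=\delta_{I,J}$.

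There is no real obstacle here; the statement is essentially the assertion that the map $f\mapsto (m_{f,I})_I$, viewed as writing $\ev_f$ in the basis $\{\ev_{u_I}\}$, is the identity on the basis vectors themselves. The only thing one must be careful about is that $u_I$ genuinely lies in $\mathcal C_N$ (which is clear, as it is even in $\mathcal L_N\subset\mathcal C_N$, so Proposition \ref{prop:decomp} applies).
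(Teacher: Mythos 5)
Your proof is correct and follows essentially the same route as the paper: write the trivial identity $\mu(u_I)=\sum_{J}\delta_{I,J}\,\mu(u_J)$ and invoke the uniqueness clause of Proposition \ref{prop:decomp}. The extra remarks about linear independence of the $\ev_{u_J}$ just unpack where that uniqueness comes from; nothing is missing.
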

\begin{proof}
It is clear that $\mu(u_I)=\sum_{\emptyset\neq J\subseteq [N]}\delta_{I,J}\mu(u_J)$. By the uniqueness of the coefficients  $m_{u_I,J}$, this equation implies $m_{u_I,J}=\delta_{I,J}$.
\end{proof}
\begin{prop}\label{m_additivity}
If $f,g\in\mathcal C_N$ and $f\cap g=\emptyset$, then $m_{f\cup g,I}=m_{f,I}+m_{g,I}$ for every $\emptyset\neq I\subseteq [N]$.
\end{prop}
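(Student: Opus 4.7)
The plan is to invoke the additivity of $\mu \in \mathcal{M}_N$ together with the uniqueness assertion of Proposition \ref{prop:decomp}.

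Fix any $\mu \in \mathcal{M}_N$. Since $f\cap g=\emptyset$, the defining property of $\mathcal{M}_N$ gives $\mu(f\cup g)=\mu(f)+\mu(g)$. Applying the decomposition \eqref{decomp} to each of $f$, $g$, and $f\cup g$, I would rewrite both sides as linear combinations of the quantities $\mu(u_I)$:
\[
\sum_{\emptyset\neq I\subseteq [N]} m_{f\cup g,I}\,\mu(u_I)
\;=\;\sum_{\emptyset\neq I\subseteq [N]} m_{f,I}\,\mu(u_I)+\sum_{\emptyset\neq I\subseteq [N]} m_{g,I}\,\mu(u_I)
\;=\;\sum_{\emptyset\neq I\subseteq [N]}(m_{f,I}+m_{g,I})\,\mu(u_I).
\]
Since this identity holds for every $\mu\in\mathcal{M}_N$, the uniqueness part of Proposition \ref{prop:decomp} (equivalently, the linear independence of the evaluations $\ev_{u_I}$ established in its proof) forces the coefficient equality $m_{f\cup g,I}=m_{f,I}+m_{g,I}$ for each $\emptyset\neq I\subseteq [N]$.

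There is essentially no obstacle: the statement is a direct consequence of additivity of functionals in $\mathcal{M}_N$ combined with the uniqueness of the expansion. The only subtle point worth double-checking is that $f\cup g$ indeed lies in $\mathcal{C}_N$, so that $m_{f\cup g,I}$ is defined; this is immediate because $\mathcal{C}_N$ is closed under $\cup$, and $a_{[N]}\not\subseteq f\cup g$ since $a_{[N]}$ is contained in neither $f$ nor $g$.
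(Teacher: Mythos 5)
Your proposal is correct and follows essentially the same argument as the paper: apply the additivity $\mu(f\cup g)=\mu(f)+\mu(g)$ for every $\mu\in\mathcal M_N$, expand via \eqref{decomp}, and conclude by the uniqueness of the coefficients from Proposition \ref{prop:decomp}. The extra remark checking that $f\cup g\in\mathcal C_N$ is a sensible (if implicit in the paper) detail.
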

\begin{proof} Since for any $\mu\in \mathcal M_N$, equation 
\[
\sum_{\emptyset\neq I\subseteq [N]}m_{f\cup g,I}\mu(u_I)=\mu(f\cup g)=\mu(f)+\mu(g)=\sum_{\emptyset\neq I\subseteq [N]}(m_{f,I}+ m_{g,I})\mu(u_I)
\]
holds, uniqueness of the coefficients $m_{f\cup g,I}$ implies the statement.
\end{proof}

If $\mu\in\mathcal M_N$, then there are infinitely many ways to extend $\mu$ to a map $\mu\colon \mathcal B_N\to\mathbb R$ preserving the additivity property $\mu(f\cup g)=\mu(f)+\mu(g)-\mu(f\cap g)$. Since such a map is uniquely defined by its values on the atomic expressions  $a_I$, and $\mu(a_I)$ is already given for $I\neq [N]$, the extension of $\mu$ is uniquely given if we prescribe the value $\mu(a_{[N]})\in \mathbb R$. This value is uniquely determined if we require that $\mu(X)=0$, since this equation holds if and only $\mu(a_{[N]})=-\sum_{I\subsetneq [N]}\mu(a_I)$.

\begin{defin} The unique extension of $\mu\in\mathcal M_N$ to a map $\mu\colon \mathcal B_N$ satisfying the conditions $\mu(f\cup g)=\mu(f)+\mu(g)-\mu(f\cap g)$ and $\mu(X)=0$ will be called the \emph{$0$-weight extension of $\mu$}. 
\end{defin}

\section{Asymptotics for the volume of Boolean expressions of large balls\label{sec:4}}

Let $f\in \mathcal C_N$ be a Boolean expression built from the variables $x_1,\dots,x_N$ and the operations $\cup$, $\cap$ and $\setminus$. For a system of $N$ points $\mathbf p=(\mathbf p_1,\dots, \mathbf p_N)\in (\mathbb R^d)^N$ and a given radius $r>0$, consider the body 
\[
B^d_f(\mathbf p,r)=f(B^d(\mathbf p_1,r),\dots,B^d(\mathbf p_N,r))
\]
obtained by evaluating $f$ on the balls $x_i=B^d(\mathbf p_i,r)$. We are interested in the asymptotic behaviour of the volume $\mathcal V^d_f(\mathbf p,r)=\Vol_d(B^d_f(\mathbf p,r))$ of this body.

For a system of points $\mathbf p=(\mathbf p_1,\dots,\mathbf p_N)\in (\mathbb R^d)^N$ and a set $I\subseteq [N]$, denote by $K_I(\mathbf p)$ the convex hull of the points $\{\mathbf p_i\mid i\in I\}$.

\begin{defin}
For $f\in \mathcal C_N$ and a system of points $\mathbf p\in (\mathbb R^d)^N$, define the \emph{Boolean quermassintegrals} $W_{f,k}^d(\mathbf p)$ and \emph{Boolean intrinsic volumes} $V_{f,k}(\mathbf p)$
by the equations 
\[
W_{f,k}^d(\mathbf p)=\sum_{\emptyset\neq I\subseteq [N]}m_{f,I}W_k^d(K_I(\mathbf p))\quad\text{ and }\quad
V_{f,k}(\mathbf p)=\sum_{\emptyset\neq I\subseteq [N]}m_{f,I}V_k(K_I(\mathbf p)).
\]
By Proposition \ref{m_additivity}, for any $k$ and $\mathbf p\in(\mathbb R^d)^N$, the maps $\mathcal C_N\ni f\mapsto W_{f,k}^d(\mathbf p)$ and $\mathcal C_N\ni f\mapsto V_{f,k}(\mathbf p)$ are in $\mathcal M_N$. We define $W_{f,k}^d(\mathbf p)$ and $V_{f,k}(\mathbf p)$ \emph{for arbitrary $f\in \mathcal B_N$} as the $0$-weight extension of these maps, respectively.
\end{defin}

\begin{thm}\label{flower_volume} For any Boolean expression $f\in \mathcal C_N$, and any fixed system of centers $\mathbf p\in (\mathbb R^d)^N$ we have
$$
\mathcal V_f^d(\mathbf p,r)= \sum_{k=d-2}^d\binom{d}{k}W_{f,k}^d(\mathbf p)r^k+O(r^{d-3})=\sum_{k=0}^2 \kappa_{d-k}V_{f,k}(\mathbf p)r^{d-k}+O(r^{d-3}).
$$
\end{thm}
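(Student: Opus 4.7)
The plan is to reduce the theorem to Corollary~\ref{Cor:union_asymptotics} via Proposition~\ref{prop:decomp}. The key observation is that for a fixed point configuration $\mathbf p\in(\mathbb R^d)^N$ and a fixed radius $r>0$, the assignment
\[
\mu_r\colon\mathcal C_N\to\mathbb R,\qquad \mu_r(g)=\mathcal V_g^d(\mathbf p,r),
\]
belongs to $\mathcal M_N$. Indeed, evaluation of Boolean expressions on the balls $B^d(\mathbf p_i,r)$ is a Boolean algebra homomorphism, so whenever $g,h\in\mathcal C_N$ satisfy $g\cap h=\emptyset$, the sets $B_g^d(\mathbf p,r)$ and $B_h^d(\mathbf p,r)$ are disjoint and their union equals $B_{g\cup h}^d(\mathbf p,r)$; hence $\mu_r(g\cup h)=\mu_r(g)+\mu_r(h)$. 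Moreover $\mu_r$ is finite-valued because every $g\in\mathcal C_N$ satisfies $g\subseteq u_{[N]}$, making $B_g^d(\mathbf p,r)$ bounded.

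Applying Proposition~\ref{prop:decomp} to $\mu_r$ then yields
\[
\mathcal V_f^d(\mathbf p,r)=\sum_{\emptyset\neq I\subseteq[N]} m_{f,I}\,\mu_r(u_I)=\sum_{\emptyset\neq I\subseteq[N]} m_{f,I}\,\Vol_d\Big(\bigcup_{i\in I}B^d(\mathbf p_i,r)\Big).
\]
Next, I substitute the three-term expansion from Corollary~\ref{Cor:union_asymptotics} into each summand. The proof of Proposition~\ref{prop:comparison} shows that the $O(r^{d-3})$ error for the subset $I$ is controlled by the diameter of $K_I(\mathbf p)$, which is bounded by the diameter of the full configuration; since only finitely many subsets $I$ arise, their errors combine into an overall $O(r^{d-3})$.

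Collecting the coefficients of $r^{d-k}$ for $k=0,1,2$ and using $V_0\equiv 1$, the coefficient of $r^{d-k}$ becomes $\kappa_{d-k}\sum_I m_{f,I}V_k(K_I(\mathbf p))=\kappa_{d-k}V_{f,k}(\mathbf p)$, which is the second form of the formula. The equivalent first form (in terms of the Boolean quermassintegrals $W_{f,k}^d$) then follows from the coefficient identity $\binom{d}{k}W_k^d(K)=\kappa_k V_{d-k}(K)$ read off by matching the two expressions in Steiner's formula \eqref{Steiner2}, applied to each $K_I(\mathbf p)$ and summed against the weights $m_{f,I}$.

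The argument presents no serious obstacle once the additivity of $\mu_r$ on disjoint Boolean expressions is recognized; the only point requiring care is the uniformity of the error across the subsets $I$, and this is immediate from the explicit estimate given in the proof of Proposition~\ref{prop:comparison}.
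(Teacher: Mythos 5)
Your proposal is correct and follows essentially the same route as the paper: define the additive set function $\mu_r(g)=\mathcal V_g^d(\mathbf p,r)$, decompose $\ev_f$ via Proposition~\ref{prop:decomp} into unions $u_I$, and substitute the expansion of Corollary~\ref{Cor:union_asymptotics}. Your added remarks on the boundedness of $B_g^d(\mathbf p,r)$ and the uniformity of the error terms over the finitely many subsets $I$ are correct details that the paper leaves implicit.
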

\begin{proof}
Let $\mu\colon\mathcal C_N\to \mathbb R$ be the additive function defined by $\mu(g)=\mathcal V_g^d(\mathbf p,r)$. Applying equation \eqref{decomp} for $\mu$, we obtain
\[
\mathcal V_f^d(\mathbf p,r)=\mu(f)=\sum_{\emptyset\neq I\subseteq [N]}m_{f,I}\mu(u_I)=\sum_{\emptyset\neq I\subseteq [N]}m_{f,I}\mathcal V_{u_I}^d(\mathbf p,r).
\]
For each $I$, $\mathcal V_{u_I}^d$ is the volume of the union of some balls, to which we can apply Corollary \ref{Cor:union_asymptotics}. This gives 
\begin{equation} 
\mathcal V_{u_I}^d=\kappa_dr^d+\kappa_{d-1}V_1(K_I)r^{d-1}+\kappa_{d-2}V_2(K_I)r^{d-2}+O(r^{d-3}).
\end{equation} 
The last two equations together with the definition of the Boolean quermassintegrals and Boolean intrisic volumes imply the theorem.
\end{proof}
\begin{rem} One of the main goals set in the introduction was to extend equations \eqref{union_asymptotics}, \eqref{intersection_asymptotics}, and \eqref{eq:union} for the volumes of Boolean expressions of large congruent balls, finding suitable generalizations of the intrinsic volumes $V_0$, $V_1$, $V_2$, appearing in \eqref{eq:union}. Theorem \ref{flower_volume} gives the desired extension and justifies our definition of the Boolean intrinsic volumes. 
\end{rem}
\section{Properties of Boolean intrinsic volumes\label{sec:5}}

The following properties are straightforward corollaries of the analogous properties of intrinsic volumes of convex bodies and the definitions.
\begin{prop}  \label{basic_properties}\mbox{}
\begin{itemize}
\item[(a)] $V_{f,0}(\mathbf p)$ does not depend on $\mathbf p$. Its value  $V_{f,0}\equiv \sum_{\emptyset\neq I\subset [N]}m_{f,I}$ is $1$ if $a_{\emptyset}\subseteq f$, and $0$ otherwise.
\item[(b)] The Boolean intrinsic volume $V_{f,k}(\mathbf p)$ does not depend on the dimension $d$.  In particular,
\[
W_{f,k}^d=\frac{\kappa_{k}}{\binom{d}{k}}V_{f,d-k}=\frac{\kappa_{k}}{\binom{d}{k}}V_{f,(d+s)-(k+s)}=\frac{\binom{d+s}{k+s}\kappa_{k}}{\binom{d}{k}\kappa_{k+s}}W^{d+s}_{f,k+s}=\frac{(d+1)\cdots(d+s)\kappa_{k}}{(k+1)\cdots(k+s)\kappa_{k+s}}W^{d+s}_{f,k+s}
\]
for any $s\in \mathbb N$.
\item[(c)] $V_{f,k}$ is a continuous function on $(\mathbb R^d)^N$ for every $d>0$.
\item[(d)] If $f,g\in \mathcal B_N$ and $f\cap g=\emptyset$, then $W_{f\cup g,k}^d=W_{f,k}^d+W_{g,k}^d$ and $V_{f\cup g,k}=V_{f,k}+V_{g,k}$.
\item[(e)] $W_{\bar f,k}^d=-W_{f,k}^d$ and $V_{\bar f,k}=-V_{f,k}$ for any $f\in \mathcal B_N$.
\end{itemize}
\end{prop}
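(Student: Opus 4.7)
The plan is to derive each of (a)--(e) by pushing the analogous classical statement about $V_k$ or $W_k^d$ of a single compact convex set through the defining linear combination $\sum_{I} m_{f,I}(\,\cdot\,)$, and, in (d)--(e), to lift the resulting identity from $\mathcal C_N$ to the whole of $\mathcal B_N$ using the defining properties of the $0$-weight extension.

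For (a), I would substitute $V_0(K_I(\mathbf p)) \equiv 1$ (valid for any nonempty compact convex set) into the definition of $V_{f,0}(\mathbf p)$; what remains is $\sum_{\emptyset\neq I\subseteq [N]} m_{f,I}$, whose value $1$ or $0$ was already computed in the proposition immediately following Proposition \ref{prop:decomp}. For (b), matching coefficients of $r^k$ in the two forms of Steiner's formula \eqref{Steiner2} yields the classical identity $\binom{d}{k} W_k^d(K) = \kappa_k V_{d-k}(K)$, and applying $\sum_I m_{f,I}(\,\cdot\,)$ to both sides transfers it at once to $W_{f,k}^d$ and $V_{f,d-k}$; dimension-independence of the classical $V_k$ (a standard fact about intrinsic volumes) transfers to $V_{f,k}$ in the same way, and the chain of equalities in (b) then reduces to the algebraic identity $\binom{d+s}{k+s}/\binom{d}{k} = (d+1)\cdots(d+s)/\bigl((k+1)\cdots(k+s)\bigr)$. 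For (c), the assignment $\mathbf p \mapsto K_I(\mathbf p)$ is continuous into the space of compact convex sets equipped with the Hausdorff metric, each classical $V_k$ is continuous on that space by \cite[Section 4.2]{Schneider}, and a finite integer-coefficient linear combination of continuous functions of $\mathbf p$ remains continuous.

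For (d), when $f, g \in \mathcal C_N$ with $f \cap g = \emptyset$ the claim is immediate from Proposition \ref{m_additivity}: expanding $W_{f \cup g, k}^d = \sum_I m_{f \cup g, I}\, W_k^d(K_I)$ and using $m_{f\cup g, I} = m_{f,I} + m_{g,I}$ splits the sum into $W_{f,k}^d + W_{g,k}^d$, and similarly for $V$. To pass to general $f, g \in \mathcal B_N$ I would note first that $\mu(\emptyset) = 0$ for any $\mu \in \mathcal M_N$ (take $f = g = \emptyset$ in the defining additivity relation), and hence also after $0$-weight extension; the inclusion--exclusion property of the extension then gives $\mu(f \cup g) = \mu(f) + \mu(g) - \mu(\emptyset) = \mu(f) + \mu(g)$. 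For (e), I would apply inclusion--exclusion of the $0$-weight extension to the complementary pair $f, \bar f$: this reads $\mu(X) + \mu(\emptyset) = \mu(f) + \mu(\bar f)$, where both terms on the left vanish by the normalising conditions of the $0$-weight extension together with the previous observation, giving $\mu(\bar f) = -\mu(f)$.

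No serious obstacle is anticipated, since the proof is essentially a bookkeeping exercise. The only point that deserves care is in (d) and (e), where one cannot apply Proposition \ref{m_additivity} directly because the coefficients $m_{f,I}$ are defined only on $\mathcal C_N$; the argument for arbitrary $f, g \in \mathcal B_N$ must therefore route through the $0$-weight extension and the two normalising identities $\mu(\emptyset) = 0$ and $\mu(X) = 0$.
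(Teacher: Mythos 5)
Your proposal is correct and is exactly the routine verification the paper has in mind: the paper gives no written proof, stating only that these are ``straightforward corollaries of the analogous properties of intrinsic volumes of convex bodies and the definitions,'' and your derivations (pushing $V_0\equiv 1$, the Steiner-coefficient identity, continuity in the Hausdorff metric, and Proposition \ref{m_additivity} through the sum $\sum_I m_{f,I}(\cdot)$, then handling general $f\in\mathcal B_N$ via the inclusion--exclusion property and the normalisations $\mu(\emptyset)=0$, $\mu(X)=0$ of the $0$-weight extension) fill in precisely those steps. No gaps.
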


We are going to find a formula for the Boolean intrinsic volumes that generalizes equation \eqref{intrinsic_volume}. 
Assume that any $k+2$ points of the system $\mathbf p=(\mathbf p_1,\dots,\mathbf p_N)\in (\mathbb R^d)^N$ are affinely independent. This can always be achieved by a small perturbation of the points if $d\geq k+1$. Choose a $k+1$ element index set   $S=\{{i_1},\dots,{i_{k+1}}\}\subset [N]$ and denote by $\sigma_S$ the convex hull of the points $\mathbf p_{i_1},\dots,\mathbf p_{i_{k+1}}$. By the general position assumption on $\mathbf p$, $\sigma_S$ is a $k$-dimensional simplex and the affine subspace $\langle \sigma_S\rangle$ spanned by it does not contain any of the points $\mathbf p_j$ for $j\notin S$. 

Define an integer valued function $n_{f,S,\mathbf p}\colon \mathbb S^{d-k-1}_S\to \mathbb Z$ on the unit sphere $\mathbb S^{d-k-1}_S=\{\mathbf u\in \mathbb S^{d-1}\mid \mathbf u\perp \langle \sigma_S\rangle \}$ as follows. Choose a vector $\mathbf u\in \mathbb S^{d-k-1}_S$. Split the 
index set $[N]$ into three parts depending on the position of the point $\mathbf p_i$ relative to the hyperplane orthogonal to $\mathbf u$, containing the simplex $\sigma_S$ by setting
\begin{align*}
\Pi_+&=\{j\in [N]\mid \langle \mathbf p_j-\mathbf p_{i_1},\mathbf u\rangle >0\},\\
\Pi_0\,&=\{j\in [N]\mid \langle \mathbf p_j-\mathbf p_{i_1},\mathbf u\rangle =0\},\\
\Pi_-&=\{j\in [N]\mid \langle \mathbf p_j-\mathbf p_{i_1},\mathbf u\rangle <0\}.
\end{align*} 
It is clear that $S\subseteq \Pi_0$ and $S=\Pi_0$ for almost all $\mathbf u$. Define the elements $y_1,\dots,y_N\in \mathcal B_N$ by the rule
\[
y_j=\begin{cases}
X&\text{ if }j\in \Pi_+\cup(\Pi_0\setminus S),\\
x_j&\text{ if }j\in S,\\
\emptyset&\text{ if }j\in \Pi_-.
\end{cases}
\]
Evaluating the Boolean expression $f$ on the $y_j$'s we obtain an element $f(y_1,\dots,y_N)\in \mathcal B_{k+1}(x_{i_1},\dots,x_{i_{k+1}})$ in the free Boolean algebra generated by the elements $x_{i_1},\dots,x_{i_{k+1}}$.
Set $n_{f,S,\mathbf p}(\mathbf u)= (-1)^{k+1}\tilde\chi_{k+1}(f(y_1,\dots,y_N))$.

The values of $n_{f,S,\mathbf p}$ are integers in the interval $[-2^k,2^k]$. Let 
$$
\nu_{f,S,\mathbf p}= \frac{1}{(d-k)\kappa_{d-k}} \int_{\mathbb S^{d-k-1}_S}n_{f,S,\mathbf p}(\mathbf u)\mathrm{d}\mathbf u
$$
be the average value of $n_{f,S,\mathbf p}$.

\begin{thm}\label{thm:2}
If $f\in \mathcal B_N$ and $\mathbf p\in (\mathbb R^d)^N$ satisfies that any $k+2$ points of $\mathbf p$ are affinely independent, then  we have
\begin{equation}\label{valyu}
V_{f,k}(\mathbf p)=\sum_{\genfrac{}{}{0pt}{}{S\subseteq [N]}{|S|=k+1}} \nu_{f,S,\mathbf p}\Vol_k(\sigma_S).
\end{equation} 
\end{thm}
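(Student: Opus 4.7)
My plan is to show that both sides of \eqref{valyu}, viewed as functions of $f\in \mathcal C_N$, belong to $\mathcal M_N$, so by Proposition \ref{prop:decomp} it suffices to verify the identity for $f=u_I$, where it reduces to the classical formula \eqref{intrinsic_volume} applied to $K_I(\mathbf p)$.

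The additivity of the LHS in $f$ is immediate from Proposition \ref{m_additivity} and the definition of $V_{f,k}$. For the RHS, observe that the substitution $x_j\mapsto y_j$ with each $y_j\in\{X,\emptyset,x_j\}$ extends to a Boolean algebra homomorphism $\mathcal B_N\to \mathcal B_{k+1}$, and any such homomorphism preserves disjointness: if $f=g\cup h$ with $g\cap h=\emptyset$ then $f(y)=g(y)\cup h(y)$ with $g(y)\cap h(y)=\emptyset$ in $\mathcal B_{k+1}$. Combined with the additivity of $\tilde\chi_{k+1}$ on disjoint unions (observed at the start of the proof of Proposition \ref{complement_Euler}), this makes $n_{f,S,\mathbf p}$, $\nu_{f,S,\mathbf p}$, and hence the RHS of \eqref{valyu} additive in $f$. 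Since Proposition \ref{prop:decomp} shows that $\{\ev_{u_I}\mid \emptyset\neq I\subseteq [N]\}$ is a basis of $\mathcal M_N^*$, any two elements of $\mathcal M_N$ that agree on every $u_I$ must coincide; so it is enough to prove \eqref{valyu} for $f=u_I$, $\emptyset\neq I\subseteq [N]$.

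For $f=u_I$, Proposition \ref{Kronecker} gives $V_{u_I,k}(\mathbf p)=V_k(K_I(\mathbf p))$, which is the LHS. To handle the RHS I compute $n_{u_I,S,\mathbf p}(\mathbf u)$ by direct substitution in $u_I(y)=\bigcup_{j\in I}y_j$. If $I\cap\Pi_+\neq\emptyset$, some $y_j=X$, so $u_I(y)=X$ and $\tilde\chi_{k+1}(X)=0$. Otherwise $u_I(y)=\bigcup_{j\in I\cap S}x_j=u_{I\cap S}\in\mathcal B_{k+1}$; Proposition \ref{zero_Euler} then forces $\tilde\chi_{k+1}=0$ unless $I\cap S=S$, i.e.\ $S\subseteq I$. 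When $S\subseteq I$ and $I\cap\Pi_+=\emptyset$ we have $u_I(y)=u_S=\overline{a_S}$, and by Proposition \ref{complement_Euler},
$$\tilde\chi_{k+1}(u_S)=-\tilde\chi_{k+1}(a_S)=-(-1)^{|S|+1}=(-1)^{k+1},$$
giving $n_{u_I,S,\mathbf p}(\mathbf u)=(-1)^{k+1}(-1)^{k+1}=1$. Thus $n_{u_I,S,\mathbf p}(\mathbf u)$ equals $1$ precisely when $S\subseteq I$ and $\langle \mathbf u,\mathbf p_j-\mathbf p_{i_1}\rangle\leq 0$ for every $j\in I$, and vanishes otherwise.

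The general position hypothesis ensures that outside a measure-zero subset of $\mathbb S^{d-k-1}_S$ we have $\Pi_0(\mathbf u)=S$; on this full-measure set the condition just found is exactly the condition that $\sigma_S$ is the face of $K_I$ exposed by $\mathbf u$, i.e.\ $\mathbf u\in N(\sigma_S,K_I)$. Hence $n_{u_I,S,\mathbf p}$ agrees almost everywhere with the indicator of $N(\sigma_S,K_I)\cap \mathbb S^{d-k-1}_S$ when $S\subseteq I$, and is zero otherwise. Integrating, $\nu_{u_I,S,\mathbf p}=\nu(\sigma_S,K_I)$ for $S\subseteq I$ and $0$ otherwise, and \eqref{intrinsic_volume} applied to $K_I$ yields
$$\sum_{|S|=k+1}\nu_{u_I,S,\mathbf p}\Vol_k(\sigma_S)=\sum_{L\in\mathcal F_k(K_I)}\Vol_k(L)\nu(L,K_I)=V_k(K_I),$$
completing the reduction. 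The main technical hurdle is the sign bookkeeping in the Euler-characteristic computation of $u_S$ and the careful check that the pointwise identification of $n_{u_I,S,\mathbf p}$ with the normal-cone indicator is valid off a null subset of $\mathbb S^{d-k-1}_S$.
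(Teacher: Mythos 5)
Your argument is essentially the paper's own proof: reduce by additivity to $f=u_I$ using the fact that the evaluations $\ev_{u_I}$ span $\mathcal M_N^*$, then identify $n_{u_I,S,\mathbf p}$ almost everywhere with the indicator function of $N(\sigma_S,K_I)\cap\mathbb S^{d-k-1}_S$ (zero when $\sigma_S$ is not a face of $K_I$) and invoke \eqref{intrinsic_volume}. Your sign computation via $u_S=\overline{a_S}$ and Proposition \ref{complement_Euler} is equivalent to the paper's computation via duality of $\cup$ and $\cap$, and your homomorphism justification for the additivity of the right-hand side is a welcome extra detail that the paper leaves implicit.

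One genuine, though easily repaired, omission: the theorem is stated for all $f\in\mathcal B_N$, but your reduction only treats $f\in\mathcal C_N$. For $f$ containing the atom $a_{[N]}$ the left-hand side is, by definition, the $0$-weight extension of $f\mapsto\sum_I m_{f,I}V_k(K_I(\mathbf p))$, so Proposition \ref{prop:decomp} alone does not reach it. You still need to observe that both sides of \eqref{valyu} are additive on all of $\mathcal B_N$ and vanish at $f=X$ (for the right-hand side because $\tilde\chi_{k+1}(X)=0$); this forces each side to be the $0$-weight extension of its restriction to $\mathcal C_N$, so agreement on $\mathcal C_N$ implies agreement on $\mathcal B_N$. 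Your own homomorphism argument already gives the additivity of the right-hand side on $\mathcal B_N$, so this is a one-line fix --- and it is exactly the step the paper inserts before its reduction to the unions $u_I$.
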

\begin{proof}
If $f,g\in \mathcal B_N$ are disjoint, that is $f\cap g=\emptyset$, then $V_{f\cup g,k}=V_{f,k}+V_{g,k}$, furthermore, $f(y_1,\dots,y_N)\cap g(y_1,\dots,y_N)=\emptyset$ for any choice of the variables $y_i$,  and since the reduced Euler characteristic is an additive function, $\nu_{f\cup g,k} =\nu_{f,k}+\nu_{g,k}$. Thus, both sides of equation \eqref{valyu} are additive functions of the Boolean expression $f$. Since both sides vanish for $f=X$, the two sides are equal for any $f\in \mathcal B_N$ if they are equal for any $f\in\mathcal C_N$. As it was shown in the proof of Proposition \ref{prop:decomp}, the evaluations $\ev_{u_I}$, for $\emptyset\neq I\subseteq [N]$, form a basis of $\mathcal M_N^*$, so it is enough to check the proposition for the unions $u_I$.

Assume $f=u_I$. Then $V_{f,k}(\mathbf p)=V_k(K_I(\mathbf p))$ by Proposition \ref{Kronecker}. Let $S=\{i_1,\dots,i_{k+1}\}\subseteq [N]$ be a set of $k+1$ indices. To understand the geometrical meaning of $n_{f,S,\mathbf p}(\mathbf u)$, consider first the value of $f(y_1,\dots,y_N)=\bigcup_{j\in I} y_j$. 

If $y_j=X$ for an index $j\in I$, then $f(y_1,\dots,y_N)=X$ and $\nu_{f,S}(\mathbf u)=\tilde\chi_{k+1}(X)=0$. Hence $n_{f,S,\mathbf p}(\mathbf u)$ vanishes if $I\not \subseteq \Pi_-\cup \Pi_0$.   By Proposition \ref{zero_Euler}, $n_{f,S,\mathbf p}(\mathbf u)$ vanishes also in the case when one of the variables $x_{i_1},\dots,x_{i_{k+1}}$ does not appear in $f(y_1,\dots,y_N)$. These variables appear in $f(y_1,\dots,y_N)$ if and only if $S\subseteq I\cap \Pi_0$. This means that if $n_{f,S,\mathbf p}(\mathbf u)\neq 0$, then $K_I(\mathbf p)$ is contained in the halfspace $\{\mathbf x\in \mathbb R^d\mid \langle \mathbf u,\mathbf x-\mathbf p_{i_1}\rangle\leq 0 \}$ and the boundary hyperplane of this halfspace intersects the polytope $K_I(\mathbf p)$ in a face that contains the $k$-dimensional simplex $\sigma_S$. What is the value of $n_{f,S,\mathbf p}(\mathbf u)$ in this case? If $I\subseteq \Pi_-\cup \Pi_0$ and $S\subseteq I\cap \Pi_0$, then 
\[
n_{f,S,\mathbf p}(\mathbf u)=(-1)^{k+1}\tilde\chi_{k+1}(x_{i_1}\cup\dots\cup x_{i_{k+1}})=-\tilde\chi_{k+1}(x_{i_1}\cap\dots\cap x_{i_{k+1}})=1.
\]
If the simplex $\sigma_S$ is not a face of $K_I(\mathbf p)$, then the smallest face of of $K_I(\mathbf p)$ that contains $\sigma_S$ has dimension bigger than $k$ because of the general position assumption on $\mathbf p$. In this case, the support of the function $n_{f,S,\mathbf p}$ is contained in a great subsphere of $\mathbb S_S^{d-k-1}$, and $\nu_{f,S,\mathbf p}=0$.

If $\sigma_S$ is a face of $K_I(\mathbf p)$, then $n_{f,S,\mathbf p}$ is the indicator function of the intersection of the cone $N(\sigma_S,K_I(\mathbf p))$ and the sphere $\mathbb S_S^{d-k-1}$, therefore 
\[
\nu_{f,S,\mathbf p}=\frac{1}{(d-k)\kappa_{d-k}} \int_{\mathbb S^{d-k-1}_S}n_{f,S,\mathbf p}(\mathbf u)\mathrm{d}\mathbf u=\frac{\Vol_{d-k}(n(\sigma_S,K_I(\mathbf p)))}{\kappa_{d-k}}=\nu(\sigma_S, K_I(\mathbf p)).
\]
As all the $k$-dimensional faces of $K_I(\mathbf p))$ are simplicies, we conclude that for $f=u_I$, we have
\[
\sum_{\genfrac{}{}{0pt}{}{S\subseteq [N]}{|S|=k+1}} \nu_{f,S,\mathbf p}\Vol_k(\sigma_S)=\sum_{\sigma\in \mathcal F_k(K_I(\mathbf p))}\nu(\sigma,K_I(\mathbf p))\Vol_k(L)=V_{k}(K_I(\mathbf p))=V_{f,k}(\mathbf p),
\]
as desired.
\end{proof}
\begin{prop}\label{prop:10} If $f \in \mathcal B_N$, $f^*$ and $f^{\bar *}$ are the dual and contradual of $f$ respectively, then
\[
V_{f^*,k}=-V_{f^{\bar *},k}=(-1)^kV_{f,k}\text{ and }W_{f^*,k}^d=-W_{f^{\bar *},k}^d=(-1)^{d-k}W_{f,k}^d.
\]
\end{prop}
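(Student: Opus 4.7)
The plan is to reduce the claim to a comparison of $V_{f^{\bar*},k}$ with $V_{f,k}$, and then exploit the formula of Theorem~\ref{thm:2} together with the Euler-characteristic duality from Proposition~\ref{Euler_dual}. The first equality $V_{f^*,k}=-V_{f^{\bar*},k}$ is immediate from Proposition~\ref{basic_properties}(e) applied to $f^*=\overline{f^{\bar*}}$, so it suffices to prove
\[
V_{f^{\bar*},k}(\mathbf p)=(-1)^{k+1}V_{f,k}(\mathbf p),
\]
since combining this with (e) yields $V_{f^*,k}=(-1)^k V_{f,k}$. By the continuity assertion in Proposition~\ref{basic_properties}(c) and the dimensional invariance in (b), I may perturb and, if necessary, embed into a sufficiently high-dimensional ambient space to assume that any $k+2$ of the points $\mathbf p_1,\dots,\mathbf p_N$ are affinely independent, so that Theorem~\ref{thm:2} applies. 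It then suffices to check, for each $(k+1)$-subset $S\subseteq[N]$, that $\nu_{f^{\bar*},S,\mathbf p}=(-1)^{k+1}\nu_{f,S,\mathbf p}$.

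Fix such an $S$ and a $\mathbf u\in\mathbb S^{d-k-1}_S$ generic enough that $\Pi_0=S$ (a full-measure condition). Write $y_j=y_j(\mathbf u)$ for the values used to define $n_{f,S,\mathbf p}(\mathbf u)$, and $z_j=y_j(-\mathbf u)$ for their antipodal counterparts. Passing from $\mathbf u$ to $-\mathbf u$ interchanges $\Pi_+$ and $\Pi_-$ while keeping $\Pi_0=S$ fixed, so a direct case check gives $\bar z_j=y_j$ for $j\notin S$ and $\bar z_j=\bar x_j=\bar y_j$ for $j\in S$. Since $(f^{\bar*})(z_1,\dots,z_N)=f(\bar z_1,\dots,\bar z_N)$, this expression is obtained from $f(y_1,\dots,y_N)\in\mathcal B_{k+1}(x_{i_1},\dots,x_{i_{k+1}})$ by complementing each of the variables $x_{i_1},\dots,x_{i_{k+1}}$ actually appearing in it---exactly the contradual operation inside $\mathcal B_{k+1}$. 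Hence $(f^{\bar*})(z_1,\dots,z_N)=(f(y_1,\dots,y_N))^{\bar*}$ in $\mathcal B_{k+1}$; identifying the outer substitution with this inner contradual is the step where the proof actually bears weight.

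Applying Proposition~\ref{Euler_dual} inside $\mathcal B_{k+1}$ then yields
\[
\tilde\chi_{k+1}\bigl((f^{\bar*})(z_1,\dots,z_N)\bigr)=(-1)^{k+1}\tilde\chi_{k+1}\bigl(f(y_1,\dots,y_N)\bigr),
\]
which rewrites as $n_{f^{\bar*},S,\mathbf p}(-\mathbf u)=(-1)^{k+1}n_{f,S,\mathbf p}(\mathbf u)$ almost everywhere on $\mathbb S^{d-k-1}_S$. Integrating and using the antipodal invariance of the spherical measure gives $\nu_{f^{\bar*},S,\mathbf p}=(-1)^{k+1}\nu_{f,S,\mathbf p}$, and summing over $S$ via Theorem~\ref{thm:2} produces $V_{f^{\bar*},k}(\mathbf p)=(-1)^{k+1}V_{f,k}(\mathbf p)$, as required. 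The quermassintegral identities follow formally from the intrinsic volume identities via Proposition~\ref{basic_properties}(b), since $W^d_{f,k}$ is a positive constant multiple of $V_{f,d-k}$: replacing $k$ by $d-k$ in the proved identity turns the sign $(-1)^k$ into $(-1)^{d-k}$, which gives $W^d_{f^*,k}=(-1)^{d-k}W^d_{f,k}$ and $W^d_{f^{\bar*},k}=-W^d_{f^*,k}$.
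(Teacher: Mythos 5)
Your proof is correct and follows essentially the same route as the paper: reduce to the identity $\nu_{f^{*},S,\mathbf p}=(-1)^k\nu_{f,S,\mathbf p}$ via Theorem~\ref{thm:2} and the continuity/general-position argument, then invoke Proposition~\ref{Euler_dual} on the substituted expression. One point in your favour: the paper asserts tersely that $f^{\bar*}(y_1,\dots,y_N)$ \emph{is} the contradual of $f(y_1,\dots,y_N)$ and writes the pointwise identity $n_{f^*,S,\mathbf p}=(-1)^k n_{f,S,\mathbf p}$, which taken literally at a fixed $\mathbf u$ is false, since substituting $\bar y_j$ also swaps the $X$'s and $\emptyset$'s assigned to the indices in $\Pi_+$ and $\Pi_-$, not just the surviving variables $x_{i_1},\dots,x_{i_{k+1}}$ (e.g.\ $f=x_1\cap x_2$, $S=\{1\}$, $2\in\Pi_+$ gives $n_{f,S,\mathbf p}(\mathbf u)=1$ but $n_{f^{\bar*},S,\mathbf p}(\mathbf u)=0$). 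Your version --- identifying $f^{\bar*}$ evaluated at the data of $-\mathbf u$ with the contradual in $\mathcal B_{k+1}$ of $f$ evaluated at the data of $\mathbf u$, and then using the antipodal invariance of the spherical measure to equate the averages $\nu$ --- is the precise statement that makes the argument go through, so your write-up is actually more careful than the paper's at exactly the step that bears the weight.
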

\begin{proof} Due to Proposition \ref{basic_properties} (e) and (b), it is enough to show the equality $V_{f^*,k}=(-1)^kV_{f,k}$. As $V_{f,k}(\mathbf p)$ does not depend on the dimension of the ambient space $\mathbb R^d$, we may assume that $d>k$. Then the set of configurations $\mathbf p=(\mathbf p_1,\dots,\mathbf p_N)\in(\mathbb R^d)^N$ satisfying that any $k+2$ of the points $\mathbf p_1,\dots,\mathbf p_N$ are affinely independent is dense in $(\mathbb R^d)^N$. Since $V_{f,k}$ is continuous on $(\mathbb R^d)^N$ for all $f\in \mathcal B_N$, it suffices to prove the equation $V_{f^*,k}(\mathbf p)=(-1)^kV_{f,k}(\mathbf p)$ for configurations satisfying this general position condition. Under this assumption, Theorem \ref{thm:2} implies the statement if we show the equations $\nu_{f^*,S,\mathbf p}=(-1)^k\nu_{f,S,\mathbf p}$. 

Consider the function $n_{f,S,\mathbf p}(\mathbf u)=(-1)^{k+1}\tilde\chi_{k+1}(f(y_1,\dots,y_N))$ in the definition of $\nu_{f,S,\mathbf p}$. It is not difficult to see that $f^{\bar *}(y_1,\dots,y_N)$ is the contradual of $f(y_1,\dots,y_N)$ and $f^{*}(y_1,\dots,y_N)$ is the dual of it, so applying Proposition \ref{Euler_dual}, we obtain
$n_{f^*,S,\mathbf p}=(-1)^k n_{f,S,\mathbf p}$. Taking the mean value of both sides over the unit sphere $\mathbb S_S^{d-k-1}$ we get the desired equation $\nu_{f^*,S,\mathbf p}=(-1)^k\nu_{f,S,\mathbf p}$.
\end{proof}

Denote by $l_{\mathbf u}\colon \mathbb R^d\to \mathbb R$
the linear function $l_{\mathbf u}\colon \mathbf x\mapsto \langle \mathbf u,\mathbf x\rangle$. If $\mathbf u$ is a unit vector, and $K$ is a bounded convex set, then the length of the interval $l_{\mathbf u}(K)$ is the width $w_K(\mathbf u)$ of $K$ in the direction of $\mathbf u$. It is known that $V_1(K)$ is proportional to the mean width of $K$, namely,
\[
V_1(K)=\frac{1}{2\kappa_{d-1}}\int_{\mathbb S^{d-1}}w_K(\mathbf u)\mathrm d \mathbf u
=\frac{d\kappa_d}{2\kappa_{d-1}}\boldsymbol{\omega}_d(K).
\]
The width and the mean width can be expressed with the help of the support function of $K$. Recall that the support function of a bounded set $X\subset \mathbb R^d$ is defined as the function $h_X\colon \mathbb S^{d-1}\to\mathbb R$, $h_X(\mathbf u)=\sup_{\mathbf x\in X}\langle \mathbf x,\mathbf u\rangle$. It is clear that $w_K(\mathbf u)=h_K(\mathbf u)+h_K(-\mathbf u)$, and 
\[
V_1(K)=\frac{1}{2\kappa_{d-1}}\int_{\mathbb S^{d-1}}(h_K(\mathbf u)+h_K(-\mathbf u))\mathrm d \mathbf u=\frac{1}{\kappa_{d-1}}\int_{\mathbb S^{d-1}}h_K(\mathbf u)\mathrm d \mathbf u.
\]
We can extend this formula for the case when $f\in\mathcal L_N$. Then $f$ can be evaluated on real numbers by setting $a\cup b=\max\{a,b\}$ and $a\cap b=\min\{a,b\}$ for $a,b\in\mathbb R$.
\begin{thm}\label{thm:3} If $f\in \mathcal L_N$, then for any $\mathbf p\in (\mathbb R^d)^N$, we have
\[
V_{f,1}(\mathbf p)=\frac{1}{\kappa_{d-1}}\int_{\mathbb S^{d-1}}f(\langle \mathbf u,\mathbf p_1\rangle,\dots,\langle \mathbf u,\mathbf p_N\rangle)\mathrm{d}\mathbf u.
\]
\end{thm}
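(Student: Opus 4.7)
The plan is to reduce the theorem to a pointwise real-valued identity via Proposition~\ref{prop:decomp} and the integral representation of $V_1$ recalled above, and then to prove that identity by a layer-cake argument based on a Boolean homomorphism extracted from the upper level sets of the coordinate functions.

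Unfolding the definition of $V_{f,1}(\mathbf{p})$ and using $V_1(K)=\kappa_{d-1}^{-1}\int_{\mathbb{S}^{d-1}}h_K(\mathbf{u})\,\d\mathbf{u}$ together with $h_{K_I(\mathbf{p})}(\mathbf{u})=\max_{i\in I}\langle\mathbf{u},\mathbf{p}_i\rangle=u_I(\langle\mathbf{u},\mathbf{p}_1\rangle,\dots,\langle\mathbf{u},\mathbf{p}_N\rangle)$, we obtain
\[
V_{f,1}(\mathbf{p})=\sum_{\emptyset\neq I\subseteq[N]}m_{f,I}V_1(K_I(\mathbf{p}))=\frac{1}{\kappa_{d-1}}\int_{\mathbb{S}^{d-1}}\sum_{\emptyset\neq I\subseteq[N]}m_{f,I}\max_{i\in I}\langle\mathbf{u},\mathbf{p}_i\rangle\,\d\mathbf{u}.
\]
The theorem then reduces to the pointwise algebraic identity
\[
f(a_1,\dots,a_N)=\sum_{\emptyset\neq I\subseteq[N]}m_{f,I}\max_{i\in I}a_i\qquad (\star)
\]
for all $\mathbf{a}\in\mathbb{R}^N$ and $f\in\mathcal{L}_N$, with $f$ on the left evaluated by $\cup=\max,\cap=\min$.

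To prove $(\star)$, for each $t\in\mathbb{R}$ I would introduce the evaluation map
\[
\mu_t\colon\mathcal{C}_N\to\mathbb{R},\qquad \mu_t(g)=g(\mathbf{1}_{a_1>t},\dots,\mathbf{1}_{a_N>t}),
\]
interpreting Boolean operations on $\{0,1\}$ by $\vee=\max,\wedge=\min$. Being a Boolean homomorphism into $\{0,1\}$, $\mu_t$ is additive on disjoint unions, so $\mu_t\in\mathcal{M}_N$. Proposition~\ref{prop:decomp} then gives $\mu_t(f)=\sum_I m_{f,I}\mu_t(u_I)$; since upper level sets commute with $\max$ and $\min$, for $f\in\mathcal{L}_N$ this reads
\[
\mathbf{1}_{f(\mathbf{a})>t}=\sum_{\emptyset\neq I\subseteq[N]}m_{f,I}\,\mathbf{1}_{\max_{i\in I}a_i>t}.
\]

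When $\mathbf{a}\geq 0$ coordinatewise, both $f(\mathbf{a})$ and each $\max_{i\in I}a_i$ are nonnegative, and integrating the above in $t$ over $(0,\infty)$ via the layer-cake formula $x=\int_0^\infty\mathbf{1}_{x>t}\,\d t$ produces $(\star)$ in that case. To extend $(\star)$ to arbitrary $\mathbf{a}$ I would use translation invariance: every $f\in\mathcal{L}_N$ contains the atom $a_\emptyset=\bigcap_{j\in[N]}x_j$, so $\sum_I m_{f,I}=1$ by Proposition~\ref{basic_properties}(a), whence both sides of $(\star)$ change by the same constant under $a_i\mapsto a_i+c$; choosing $c$ large enough to push $\mathbf{a}$ into the positive orthant reduces the general case to the nonnegative one. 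The main delicate point of the plan is verifying that $\mu_t$ genuinely lies in $\mathcal{M}_N$ and that $\mu_t(f)=\mathbf{1}_{f(\mathbf{a})>t}$ for $f\in\mathcal{L}_N$; both rest on the compatibility of the Boolean $\cup,\cap$ on $\{0,1\}$ with $\max,\min$ on $\mathbb{R}$. Once $(\star)$ is established, integrating it pointwise in $\mathbf{u}$ over $\mathbb{S}^{d-1}$ finishes the proof.
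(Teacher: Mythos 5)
Your argument is correct, and it reaches the theorem by a genuinely different verification of the central identity than the paper does. Both proofs reduce the statement to the pointwise equality $f(a_1,\dots,a_N)=\sum_{\emptyset\neq I\subseteq[N]}m_{f,I}\max_{i\in I}a_i$ with $a_i=\langle\mathbf u,\mathbf p_i\rangle$ (the paper via the width form of $V_1$, you via the support-function form), and both obtain it from Proposition \ref{prop:decomp} applied to a suitable one-dimensional valuation; the difference is which valuation. The paper uses $\mu(g)=\Vol_1\bigl(g(S_1(\mathbf u),\dots,S_N(\mathbf u))\bigr)$ for the fattened intervals $S_i(\mathbf u)=[\langle\mathbf u,\mathbf p_i\rangle-R,\langle\mathbf u,\mathbf p_i\rangle+R]$, where $R$ is chosen so that all intervals share the interior point $0$; this forces every lattice combination of the $S_i$ to be an interval with endpoints $-f(-a_1,\dots,-a_N)-R$ and $f(a_1,\dots,a_N)+R$, and the spurious terms are removed at the end by the sphere symmetry $\int h(\mathbf u)\,\mathrm d\mathbf u=\int h(-\mathbf u)\,\mathrm d\mathbf u$ together with $\sum_I m_{f,I}=1$. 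You instead apply Proposition \ref{prop:decomp} to the two-valued homomorphisms $\mu_t(g)=g(\mathbf 1_{a_1>t},\dots,\mathbf 1_{a_N>t})$ (which do lie in $\mathcal M_N$, since a Boolean homomorphism into $\{0,1\}$ is additive on disjoint unions, and do satisfy $\mu_t(f)=\mathbf 1_{f(\mathbf a)>t}$ for $f\in\mathcal L_N$ because $x\mapsto\mathbf 1_{x>t}$ is a lattice homomorphism $(\mathbb R,\max,\min)\to(\{0,1\},\max,\min)$), and then integrate in $t$ by the layer-cake formula, invoking $\sum_I m_{f,I}=1$ only to translate into the positive orthant. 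Your route trades the paper's interval bookkeeping and the $\pm R$ cancellation for a one-parameter family of valuations and an extra integration in $t$; both rest on the same two ingredients, Proposition \ref{prop:decomp} and $\sum_I m_{f,I}=1$ for $f\in\mathcal L_N$, but your level-set argument is a clean and self-contained alternative to the paper's common-interior-point trick.
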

\begin{proof} Suppose that the points $\mathbf p_i$ are all contained in the interior of the ball $B_R=B^d(\mathbf 0,R)$. Since $f\in \mathcal L_N$, $a_\emptyset\subseteq f$, therefore $\sum_{\emptyset\neq I\subseteq [N] }m_{f,I}=1$, and    
\[
V_{f,1}(\mathbf p)=\sum_{\emptyset\neq I\subseteq [N]} m_{f,I}V_1(K_I(\mathbf p))=\left(\sum_{\emptyset\neq I\subseteq [N]} m_{f,I}V_1(K_I(\mathbf p)+B_R)\right)-V_1(B_R).
\]
Denote by $S_i(\mathbf u)$ the interval $l_{\mathbf u}(\{\mathbf p_i\}+B_R)=[\langle\mathbf u,\mathbf p_i\rangle -R,\langle\mathbf u,\mathbf p_i\rangle +R]$. Then
\[
V_1(K_I(\mathbf p)+B_R)=\frac{1}{2\kappa_{d-1}}\int_{\mathbb S^{d-1}}\Vol_1(l_{\mathbf u} (K_I+B_R)\mathrm d \mathbf u=\frac{1}{2\kappa_{d-1}}\int_{\mathbb S^{d-1}}\Vol_1\left(\bigcup_{i\in I}S_i(\mathbf u) \right)\mathrm d \mathbf u,
\]
and
\[
V_{f,1}(\mathbf p)=\frac{1}{2\kappa_{d-1}}\int_{\mathbb S^{d-1}}\Big(\sum_{\emptyset\neq I\subseteq [N]} m_{f,I}\Vol_1\Big(\bigcup_{i\in I}S_i(\mathbf u) \Big)\Big)\mathrm d \mathbf u-\frac{d\kappa_d R}{\kappa_{d-1}}.
\]
For any fixed $\mathbf u\in \mathbb S^{d-1}$, the function $\mu\colon \mathcal C_N\to \mathbb R$ defined by $\mu(f)=\Vol_1(f(S_1(\mathbf u),\dots,S_N(\mathbf u)))$ is in $\mathcal M_N$, therefore Proposition \ref{prop:decomp} yields
\[
\sum_{\emptyset\neq I\subseteq [N]} m_{f,I}\Vol_1\Big(\bigcup_{i\in I}S_i(\mathbf u) \Big)=\sum_{\emptyset\neq I\subseteq [N]} m_{f,I}\mu(u_I)=\mu(f)= \Vol_1(f(S_1(\mathbf u),\dots,S_N(\mathbf u)).
\]
By the choice of $R$, $0$ is a common interior point of all the intervals $S_i(\mathbf u)$. For this reason, all the sets that can be obtained from these intervals using the operations $\cup$ and $\cap$ are also intervals. In particular,
\[
f(S_1(\mathbf u),\dots,S_N(\mathbf u))=[-f(-\langle \mathbf u,\mathbf p_1\rangle,\dots,-\langle \mathbf u,\mathbf p_N\rangle)-R,f(\langle \mathbf u,\mathbf p_1\rangle,\dots,\langle \mathbf u,\mathbf p_N\rangle)+R],
\]
and 
\[
 \Vol_1(f(S_1(\mathbf u),\dots,S_N(\mathbf u))=f(\langle \mathbf u,\mathbf p_1\rangle,\dots,\langle \mathbf u,\mathbf p_N\rangle)+f(\langle -\mathbf u,\mathbf p_1\rangle,\dots,\langle -\mathbf u,\mathbf p_N\rangle)+2R.
\]
Using the fact that for any integrable function $h\colon \mathbb S^{d-1}\to \mathbb R$, we have $\int_{\mathbb S^{d-1}}h(\mathbf u)\mathrm{d}\mathbf u=\int_{\mathbb S^{d-1}}h(-\mathbf u)\mathrm{d}\mathbf u$, these equations give
\begin{align*}
V_{f,1}(\mathbf p)&=\frac{1}{2\kappa_{d-1}}\int_{\mathbb S^{d-1}}(f(\langle \mathbf u,\mathbf p_1\rangle,\dots,\langle \mathbf u,\mathbf p_N\rangle)+f(\langle -\mathbf u,\mathbf p_1\rangle,\dots,\langle -\mathbf u,\mathbf p_N\rangle)+2R)\mathrm{d}\mathbf{u}-\frac{d\kappa_d R}{\kappa_{d-1}}\\
&=\frac{1}{\kappa_{d-1}}\int_{\mathbb S^{d-1}}f(\langle \mathbf u,\mathbf p_1\rangle,\dots,\langle \mathbf u,\mathbf p_N\rangle)\mathrm{d}\mathbf{u},
\end{align*}
as we wanted to show.
\end{proof}

\section{Monotonocity of the Boolean intrinsic volume \texorpdfstring{$V_{f,1}$}{V(f,1)}\label{sec:6}}

In this section, we prove the following result.
\begin{thm}\label{T:V_f,1} Assume that the Boolean expression $f\in \mathcal C_N$ can be represented by a formula in which  each of the variables occurs exactly once. Define the signs $\epsilon_{ij}^f$, for  $1\leq i<j\leq N$, as in the introduction.  If the configurations $\mathbf p=(\mathbf 
p_1,\dots,\mathbf p_N)$ and $\mathbf q=(\mathbf q_1,\dots,\mathbf q_N)\in (\mathbb R^d)^N$ 
satisfy the inequalities $\epsilon_{ij}^f(\d(\mathbf p_i,\mathbf p_j)- 
\d(\mathbf q_i,\mathbf q_j))\geq 0$ for all $0\leq i<j\leq N$, then we have
\begin{equation}\label{V_f,1_ineq}
V_{f,1}(\mathbf p)\geq V_{f,1}(\mathbf q).
\end{equation}
\end{thm}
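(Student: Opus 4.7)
The plan is to combine Theorem~\ref{thm:2} with a Schläfli-type variational identity, reducing the monotonicity of $V_{f,1}$ to an elementary sign bookkeeping.

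First, I specialize Theorem~\ref{thm:2} to $k = 1$: each $1$-simplex $\sigma_{\{i,j\}}$ has length $\|\mathbf p_i - \mathbf p_j\|$, so for $\mathbf p$ in the general position assumed there,
\[
V_{f,1}(\mathbf p) = \sum_{1 \leq i < j \leq N} \nu_{f, \{i,j\}, \mathbf p} \, \|\mathbf p_i - \mathbf p_j\|.
\]
For generic $\mathbf u \in \mathbb S^{d-2}_{\{i,j\}}$ we have $\Pi_0 = \{i,j\}$, so the substitution $(y_k)_{k \neq i,j}$ entering the definition of $n_{f, \{i,j\}, \mathbf p}(\mathbf u)$ in Section~\ref{sec:5} is exactly one of the $X/\emptyset$ substitutions used to define $\epsilon_{ij}^f$ in the introduction. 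Under the single-occurrence hypothesis, $f(y_1,\ldots,y_N) \in \{X,\emptyset,x_i \cap x_j, x_i \cup x_j, x_i \setminus x_j, x_j \setminus x_i\}$, and a direct calculation gives $\tilde\chi_2(x_i \cap x_j) = -1$, $\tilde\chi_2 = +1$ on the three other non-trivial forms, and $\tilde\chi_2(X) = \tilde\chi_2(\emptyset) = 0$. Matching with the definition of $\epsilon_{ij}^f$ yields $n_{f, \{i,j\}, \mathbf p}(\mathbf u) \in \{0, \epsilon_{ij}^f\}$ almost everywhere, whence
\[
\nu_{f, \{i,j\}, \mathbf p} = \epsilon_{ij}^f \, \rho_{ij}(\mathbf p), \qquad \rho_{ij}(\mathbf p) \geq 0.
\]

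Next, by the dimension-independence in Proposition~\ref{basic_properties}(b), I embed $\mathbf p, \mathbf q$ isometrically into $\mathbb R^D$ for $D \geq N - 1$. In such dimension the set of realizable squared-distance matrices is the convex cone of positive semidefinite Gram-type matrices of rank $\leq N - 1$, so linear interpolation of the squared distances joins $\mathbf q$ (at $t = 0$) to $\mathbf p$ (at $t = 1$) by a piecewise-smooth path $\mathbf p(t)$ on which
\[
\epsilon_{ij}^f \cdot \tfrac{d}{dt}\|\mathbf p_i(t) - \mathbf p_j(t)\| \geq 0 \quad \text{for every } i < j,
\]
and a small generic perturbation arranges for the general-position hypothesis of Theorem~\ref{thm:2} to hold and the combinatorial type of every $K_I(\mathbf p(t))$ to be constant outside a finite set of $t$-values.

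Finally, I apply the classical Schläfli identity, which for each smoothly deforming convex polytope $K_I(\mathbf p(t))$ of fixed combinatorial type reads $\tfrac{d}{dt} V_1(K_I(\mathbf p(t))) = \sum_e \nu(e, K_I(\mathbf p(t))) \, \tfrac{d\ell(e)}{dt}$, summed over edges of $K_I$. Combining this with $V_{f,1}(\mathbf p) = \sum_I m_{f,I} V_1(K_I(\mathbf p))$ and with the identity $\nu_{f, \{i,j\}, \mathbf p} = \sum_{I : [\mathbf p_i, \mathbf p_j] \in \mathcal F_1(K_I)} m_{f, I}\, \nu([\mathbf p_i, \mathbf p_j], K_I)$, which is implicit in the proof of Theorem~\ref{thm:2}, gives
\[
\frac{d}{dt} V_{f,1}(\mathbf p(t)) = \sum_{i<j} \nu_{f, \{i,j\}, \mathbf p(t)} \cdot \tfrac{d}{dt}\|\mathbf p_i(t) - \mathbf p_j(t)\|
\]
for almost every $t$; substituting $\nu_{f, \{i,j\}, \mathbf p(t)} = \epsilon_{ij}^f \rho_{ij}(\mathbf p(t))$ and using the monotonicity of the path makes the right-hand side non-negative. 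Integration from $t = 0$ to $t = 1$, with continuity of $V_{f,1}$ from Proposition~\ref{basic_properties}(c) bridging the finite set of combinatorial transitions, yields \eqref{V_f,1_ineq}. The main technical obstacle is this gluing across combinatorial transitions, where the classical Schläfli identity must be reinterpreted through vertex coalescences and edge birth/death events; this requires the variational machinery of the type developed in \cite{Csikos_Schlafli}.
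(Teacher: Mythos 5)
Your argument is correct in outline, but it takes a genuinely different route from the paper. The paper's proof is much shorter: it embeds both configurations into $\mathbb R^{2d}$, joins $(\mathbf p_i,\mathbf 0)$ to $(\mathbf 0,\mathbf q_i)$ by the explicit analytic curves $\mathbf z_i(t)=(\cos(t\pi/2)\mathbf p_i,\sin(t\pi/2)\mathbf q_i)$, along which every $\d(\mathbf z_i(t),\mathbf z_j(t))^2$ is a convex combination of $\d(\mathbf p_i,\mathbf p_j)^2$ and $\d(\mathbf q_i,\mathbf q_j)^2$ and hence monotone in the right direction; the volume-monotonicity theorem of \cite{Csikos_Schlafli} for such monotone analytic motions then gives $\mathcal V_f^{2d}(\mathbf p,r)\geq\mathcal V_f^{2d}(\mathbf q,r)$ for every $r$, and the inequality $V_{f,1}(\mathbf p)\geq V_{f,1}(\mathbf q)$ is read off from the coefficient of $r^{2d-1}$ in Theorem~\ref{flower_volume}, the $r^{2d}$ terms cancelling by Proposition~\ref{basic_properties}(a). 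You instead work directly with $V_{f,1}$: your computation that $n_{f,\{i,j\},\mathbf p}(\mathbf u)\in\{0,\epsilon_{ij}^f\}$, hence $\nu_{f,\{i,j\},\mathbf p}=\epsilon_{ij}^f\rho_{ij}(\mathbf p)$ with $\rho_{ij}\geq 0$, is correct and is a genuinely informative byproduct (it makes explicit \emph{why} the signs $\epsilon_{ij}^f$ are the right hypothesis), and the linear interpolation of squared distances in $\mathbb R^{N-1}$ is the same device the paper's trigonometric path realizes concretely. What your route buys is a pointwise first-variation identity and a potential avenue toward the strict inequality raised at the end of Section~\ref{sec:6}; what it costs is two nontrivial lemmas that you assert rather than prove. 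First, $\frac{d}{dt}V_1(K(t))=\sum_e\nu(e,K(t))\,\dot\ell(e)$ in general dimension $d$ is \emph{not} the classical Schl\"afli identity (which gives it only for $d=3$ via $\sum_e\ell(e)\,\dot\theta_e=0$); it needs a separate argument, e.g.\ differentiating $V_1(K)=\kappa_{d-1}^{-1}\int_{\mathbb S^{d-1}}h_K(\mathbf u)\,\ud\mathbf u$ and using the divergence-theorem identity $\int_{N(v,K)\cap\mathbb S^{d-1}}\mathbf u\,\ud\mathbf u=\kappa_{d-1}\sum_{w}\nu([v,w],K)\frac{\mathbf v-\mathbf w}{\|\mathbf v-\mathbf w\|}$ on each normal cone. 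Second, the ``gluing across combinatorial transitions'' is better handled not by Schl\"afli-type surgery but by observing that $t\mapsto V_{f,1}(\mathbf p(t))$ is locally Lipschitz (mean width is Lipschitz in the vertices), hence absolutely continuous, so a nonnegative a.e.\ derivative suffices; likewise, perturbing the path to force general position can break the distance monotonicity, so you should instead perturb the endpoints and use the continuity of $V_{f,1}$ from Proposition~\ref{basic_properties}(c). With those two points repaired your proof is complete and independent of the volume inequality of \cite{Csikos_Schlafli} that the paper relies on.
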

\begin{proof} 
It is proved in \cite{Csikos_Schlafli}, that if there exist piecewise analytic continuous maps $\mathbf z_i\colon[0,1]\to \mathbb R^d$ for $1\leq i\leq N$, such that $\mathbf z_i(0)=\mathbf p_i$, $\mathbf z_i(1)=\mathbf q_i$, and the distances $\d(\mathbf z_i(t),\mathbf z_j(t))$ are weakly monotonous functions of $t$ for all $i$ and $j$, then ineqality  \eqref{general_Kneser} is true for any choice of the radii. It is not difficult to see that the analytic curves $\mathbf z_i\colon[0,1]\to \mathbb R^d\times \mathbb R^d$ defined by $\mathbf z_i(t)=(\cos(t\pi/2)\mathbf p_i,\sin(t\pi/2)\mathbf q_i)$ connect the points $(\mathbf p_i,\mathbf 0)$ to the points $(\mathbf 0,\mathbf q_i)$ in the required way, but jumping into $\mathbb R^{2d}$. Thus, embedding the centers into $\mathbb R^{2d}$, our assumptions imply the inequality
\begin{equation}\label{general_Kneser_2d}
\mathcal V_f^{2d}(\mathbf p,r)=\vol_{2d}\left(B^{2d}_j(\mathbf p,r)\right)\geq 
\vol_{2d}\left(f( B^{2d}_f(\mathbf q,r)\right)=\mathcal V_f^{2d}(\mathbf q,r)
\end{equation}
for any choice of the radius $r$. By Proposition \ref{basic_properties} (a), $V_{f,0}(\mathbf p)=V_{f,0}(\mathbf q)$, therefore Theorem \ref{flower_volume} gives
\[
0\leq \mathcal V^{2d}_f(\mathbf p,r)- \mathcal V^{2d}_f(\mathbf q,r)=\kappa_{2d-1}(V_{f,1}(\mathbf p)-V_{f,1}(\mathbf q))r^{2d-1}+O(r^{2d-2}).
\]
This inequality can hold for large $r$ only if the coefficient of the dominant term is nonnegative, i.e.,  $V_{f,1}(\mathbf p)\geq V_{f,1}(\mathbf q)$.
\end{proof}

It seems to be an interesting question whether we can write strict inequality in \eqref{V_f,1_ineq} if, in addition to the assumptions of Theorem \ref{T:V_f,1}, we know that the configurations  $\mathbf p$ and $\mathbf q$ are not congruent. An affirmative answer would imply that the generalized Kneser--Poulsen conjecture holds for Boolean expression of congruent balls if the radius of the balls is greater than a certain number depending on the system of the centers.

\section{Acknowledgements}
This research was supported by the Hungarian National Science and Research 
Foundation OTKA K 112703. Part of the research was done in the academic year 2014/15, while the author enjoyed the 
hospitality of the MTA Alfr\'ed R\'enyi Institute of Mathematics as a guest 
researcher. 		
		
\bibliographystyle{spsmci}
\bibliography{flower_weight}
\end{document}